\newtheorem{theorem}{Theorem}[section]
\newtheorem*{folkconjecture}{Folk Conjecture}
\newtheorem{lemma}[theorem]{Lemma} 
\newtheorem{proposition}[theorem]{Proposition} 
\newtheorem{example}[theorem]{Example}
\newtheorem{question}{Question}
\newtheorem{thmletter}{Theorem}
\newtheorem*{thmIntroduction}{Theorem \ref{thm:TheoremA}}
\newcommand{\out}{\operatorname{Out}}
\newcommand{\aut}{\operatorname{Aut}}
\newcommand{\bfGamma}{\mathbf{\Gamma}}
\newcommand{\bGamma}{\mathbf{\Gamma}}
\newcommand{\<}{\langle}
\renewcommand{\>}{\rangle}
\newcommand{\p}[1]{\noindent {\newline\bf #1.}}
\begin{document}

\title[JSJ-decompositions of one-relator groups with torsion]{The JSJ-decompositions of one-relator groups with torsion}\title{The JSJ-decompositions of one-relator groups with torsion}
\author[A.D.Logan]{Alan D. Logan}

\address{University of Glasgow, School of Mathematics and Statistics, University Gardens, G12 8QW, Scotland}
\email{Alan.Logan@glasgow.ac.uk}
\date{\today}

\keywords{One relator groups with torsion, JSJ-decompositions, Hyperbolic groups}
\subjclass[2010]{20E99, 20F65, 20F67}

\maketitle

\begin{abstract}
In this paper we use JSJ-decompositions to formalise a folk conjecture recorded by Pride on the structure of one-relator groups with torsion. We prove a slightly weaker version of the conjecture, which implies that the structure of one-relator groups with torsion closely resemble the structure of torsion-free hyperbolic groups.
\end{abstract}

\section{Introduction}

A classical result of Karrass, Magnus and Solitar states that if a word $R$ over an alphabet $X^{\pm1}$ is not a proper power of any other word over $X^{\pm1}$ then a group $G$ with a presentation of the form $G=\langle X; R^n\rangle$ has torsion if and only if $n>1$; moreover, the word $R$ has order precisely $n$~\cite{karrass1960elements} (see also their book \cite[Corollary~4.11~\&~Theorem~4.12]{mks}). Such a group $G$ is called a \emph{one-relator group with torsion} if we further stipulate that $|X|>1$.

In this paper we study the structure of one-relator groups with torsion which do not admit a free group as a free factor; this is precisely the class of finitely-generated, one-ended one-relator groups with torsion \cite{fischer1972one}.

\p{One-relator groups} One-relator groups are classically studied (see, for example, the classic texts in combinatorial group theory \cite{mks} \cite{L-S} or the early work of Magnus \cite{magnus1930freiheitssatz} \cite{magnus1932wordproblem}), and these groups continue to have applications to this day, for example in $3$-dimensional topology and knot theory~\cite{2014Ichihara}~\cite{Friedl2015Two}. One-relator groups with torsion are hyperbolic and as such they serve as important test cases for this larger class of groups. For example, the isomorphism problem for two-generator, one-relator groups with torsion was shown to be soluble \cite{Pride1977} long before Dahmani--Guirardel's recent resolution of the isomorphism problem for all hyperbolic groups \cite{dahmani2011isomorphism}. As another example, Wise recently resolved the classical conjecture of G. Baumslag that all one-relator groups with torsion are residually finite \cite{wise2012riches}, while it is still an open question as to whether all hyperbolic groups are residually finite.

\p{Main results}
Pride has recorded the following folk conjecture \cite{pride1977twogensubgps}.
\begin{folkconjecture}
Apart from the complications introduced by having elements of finite order, one-relator groups with torsion behave much like free groups.
\end{folkconjecture}
Using JSJ-decompositions, we can make this conjecture concrete as follows. One-relator groups with torsion are hyperbolic, and JSJ-decompositions are important structural invariants of hyperbolic groups. The JSJ-decomposition of a hyperbolic group is a canonical splitting of the group as a graph of groups where every edge group is virtually-$\mathbb{Z}$. For a given hyperbolic group $H$, JSJ-decompositions can be used to determine the model theory~\cite{Sela2009} and the isomorphism class~\cite{dahmani2011isomorphism} of $H$, and to coarsely determine the outer automorphism group~\cite{levitt2005automorphisms} of $H$.
\begin{folkconjecture}[JSJ-version]
Let $G$ be a one-relator group with torsion which does not admit a free group as a free factor. Then precisely one vertex group of the JSJ-decomposition of $G$ contains torsion; all other vertex groups are free groups.
\end{folkconjecture}
The main result of this paper, Theorem~\ref{thm:TheoremA}, proves a slightly weaker version of the above conjecture. We state an abbreviated version of Theorem~\ref{thm:TheoremA} below. The complete statement, along with the proof, can be found in Section~\ref{sec:JSJ}.
\begin{thmIntroduction}
Assume that $G=\<X; R^n\>$ is a one-relator group with torsion which does not admit a free group as a free factor. Then precisely one non-elementary vertex group of the JSJ-decomposition of $G$ contains torsion; all other non-elementary vertex groups are torsion-free hyperbolic groups. In general, an edge group is a subgroup of malnormal infinite cyclic subgroup of $G$.
\end{thmIntroduction}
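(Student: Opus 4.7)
The plan is to leverage Newman's Lemma \NewmanCitation, which asserts that every torsion element of $G$ is conjugate to a power of $R$, so that $\langle R\rangle$ is (up to conjugation) the unique maximal finite subgroup of $G$. Combined with hyperbolicity and one-endedness of $G$---the latter by \cite{fischer1972one}---this places us in the setting of a JSJ-decomposition over infinite virtually cyclic edge groups, whose Bass--Serre tree $T$ has the property that every finite subgroup of $G$ fixes a vertex.

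Existence of a non-elementary vertex group containing torsion is then immediate: $\langle R\rangle$ fixes some vertex $v_{0}\in T$, and its stabiliser $V_{0}$ is a non-elementary JSJ vertex group containing $\langle R\rangle$. For uniqueness, suppose some other non-elementary JSJ vertex group $V_{1}$ contained a non-trivial torsion element $t$. By Newman, $t$ is conjugate in $G$ to some $R^{k}$; after replacing $V_{1}$ by a conjugate we may assume $R^{k}\in V_{1}$, so both $v_{0}$ and the vertex $v_{1}$ stabilised by $V_{1}$ lie in the fixed subtree $T^{R^{k}}$. The geodesic $[v_{0},v_{1}]$ then lies entirely in $T^{R^{k}}$, forcing the stabiliser $V$ of some edge along it---an infinite virtually cyclic subgroup of $G$---to contain $R^{k}$.

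I would derive a contradiction from the existence of such a $V$ by combining the centraliser identity $C_{G}(R^{j})=\langle R\rangle$ for $0<j<n$ (standard for one-relator groups with torsion) with Stallings' classification of infinite virtually cyclic groups. In the $\mathbb{Z}$-type case $V=F\rtimes\mathbb{Z}$ the torsion of $V$ is the finite normal subgroup $F$, which contains $\langle R^{k}\rangle$; since $F$ is a finite subgroup of $G$ it is conjugate (by Newman) to a subgroup of $\langle R\rangle$, so its centraliser $C_{G}(F)$ is a conjugate of $\langle R\rangle$ and hence finite, and $N_{G}(F)/C_{G}(F)$ embeds into the finite group $\aut(F)$; since $F$ is normal in $V$ we have $V\leq N_{G}(F)$, contradicting that $V$ is infinite. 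In the $D_{\infty}$-type case $V=F_{1}*_{F}F_{2}$ a parallel argument works when $F\neq\{1\}$: both $F_{i}$ normalise $F$, so $V\leq N_{G}(F)$, which is finite by the same reasoning.

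The main obstacle is the residual sub-case $F=\{1\}$, which forces $V\cong\mathbb{Z}/2*\mathbb{Z}/2$ and $n=2$. Here the normaliser-based contradiction does not apply directly and a more delicate argument is needed, perhaps by invoking the canonicity and automorphism-equivariance of the JSJ, or by showing directly that any two order-$2$ conjugates of $R$ generating a copy of $D_{\infty}$ must already lie in a common non-elementary vertex group. Once this sub-case is resolved, the fixed subtree $T^{\langle R\rangle}$ reduces to the $G$-orbit of $v_{0}$, so every other non-elementary vertex group of the JSJ is torsion-free and is hyperbolic as a vertex group of the JSJ of a hyperbolic group. The finer classification of edge and elementary vertex groups promised in the full Theorem~\ref{thm:TheoremA} then follows from these same centraliser calculations together with the structure theory of infinite virtually cyclic groups.
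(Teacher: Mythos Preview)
Your approach via Bass--Serre fixed-point sets and centraliser calculations is genuinely different from the paper's. The paper does not argue on the tree directly; instead it analyses one-edge splittings of $G$: Lemmas~\ref{lem:HNN} and~\ref{lem:amalg} show that any virtually-$\mathbb{Z}$ edge group either avoids $T=\langle\langle R\rangle\rangle$ (and hence, by Lemma~\ref{lem:vcsubgroups}, lies in a malnormal infinite cyclic subgroup) or, in the amalgam case, one of the two factors is itself infinite dihedral. The Principle Lemma~\ref{thm:generalJSJstructural} then confines all dihedral phenomena to degree-one elementary vertices, so that after collapsing the JSJ to a two-vertex graph any remaining edge groups are infinite cyclic, and Lemma~\ref{lem:torsionvertices} (a routine conjugacy-theorem argument) finishes.

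The gap you flag---an edge group $V\cong C_{2}\ast C_{2}$ with $n=2$---is precisely where the substance of the paper's argument lies, and I do not see how to close it with centraliser or normaliser calculations alone. Knowing $N_{G}(\langle R\rangle)=\langle R\rangle$ says nothing about a subgroup generated by two \emph{distinct} conjugates of $R$, and such $D_{\infty}$ edge groups are not a priori excluded (the paper itself leaves open whether they actually occur). What the paper proves in Lemma~\ref{lem:DsplitFPA} is not that a $D_{\infty}$ edge group is impossible, but that whenever $G=A\ast_{D_{\infty}}B$ one of $A$, $B$ is already $D_{\infty}$; the proof uses the Kurosh description of subgroups of $T$ (Proposition~\ref{prop:sbgpT} and Lemma~\ref{lem:AsbgpT}), the ends comparison of Lemma~\ref{lem:ends}, and a count of conjugacy classes of involutions in $D_{\infty}$. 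Your suggested patches (JSJ-equivariance, or forcing two involutions into a common vertex group) would have to reproduce this content. A smaller point: your claim that the vertex $v_{0}$ fixed by $\langle R\rangle$ is non-elementary also needs justification when $n=2$, since $\langle R\rangle\cong C_{2}$ may well sit inside an elementary $D_{\infty}$ vertex group; and the closing assertion that the finer classification of edge and elementary vertex groups ``follows from these same centraliser calculations'' is too optimistic---that classification is exactly the Principle Lemma, and it rests on the splitting analysis above rather than on centralisers.
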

Theorem~\ref{thm:TheoremA} implies that, in the sense of Dahmani--Guirardel~\cite{dahmani2011isomorphism}, the $\mathcal{Z}$-max JSJ-decomposition of a one-ended one-relator group with torsion $G=\< X;R^n\>$ is precisely the JSJ-decomposition of the group $G$. Dahmani--Guirardel's notion of a $\mathcal{Z}$-max JSJ-decomposition is fundamental to their resolution to the isomorphism problem for all hyperbolic groups.

In this paper we give examples of one-relator groups with torsion which have non-trivial JSJ-decomposition; in each case the torsion-free vertex groups are free, and the vertex group containing torsion is a one-relator group with torsion. These examples imply that the statement of Theorem~\ref{thm:TheoremA} is not vacuous.

\p{Theorem~\ref{thm:TheoremA} and the Folk Conjecture}
Theorem~\ref{thm:TheoremA} implies that the JSJ-decompositions of one-ended one-relator groups with torsion have a structure similar to that of torsion-free hyperbolic groups. This is a weak version of the folk conjecture stated above.

The statement in Theorem~\ref{thm:TheoremA} relating to edge groups is not part of the JSJ-version of the folk conjecture. In Section~\ref{sec:noEvenTor} we state and prove Theorem~\ref{thm:TheoremB}, which is Theorem~\ref{thm:TheoremA} but without the statement about edge groups. Theorem~\ref{thm:TheoremB} is of interest because it has a short proof. Note, however, that the statement about edge groups again mimics the torsion-free hyperbolic groups case, in the sense that in torsion-free hyperbolic groups all edge groups are subgroups of malnormal, infinite cyclic subgroups.

In order to prove the analogous result to Theorem~\ref{thm:TheoremA} for free groups rather than torsion-free hyperbolic groups it would be sufficient to provide a positive answer to the following question.
\begin{question}
\label{Q:freevertices}
Is every torsion-free vertex group in the JSJ-decomposition of a one-ended one-relator group with torsion a free group?
\end{question}
A positive answer to Question~\ref{Q:freevertices} does not completely answer the JSJ-version of the folk conjecture stated above. This is because Theorem~\ref{thm:TheoremA} leaves the possibility that a one-relator group with torsion could split as $G\cong A\ast_CB$ where $B\neq C$ and both $B$ and $C$ are infinite dihedral. A positive answer to Question~\ref{Q:freevertices} and a negative answer to the following question, Question~\ref{Q:infDihedral}, would prove the JSJ-version of the folk conjecture.
\begin{question}
\label{Q:infDihedral}
Does there exist a one-ended one-relator group with torsion which splits non-trivially as $A\ast_CB$ with $B$ and $C$ both infinite dihedral?
\end{question}
Question~\ref{Q:infDihedral} is the least important of the questions stated here. This is because its relevance to the conjecture depends on the precise definition of ``JSJ-decomposition'' we are working with. We discuss this relevance at the end of Section~\ref{sec:JSJ}.

A positive answer to the following question would make the folk conjecture much stronger.
\begin{question}
Is the unique non-elementary vertex group containing torsion in the JSJ-decomposition of a one-ended one-relator group with torsion itself a one-relator group with torsion?
\end{question}
We do not know if these questions admit positive answers or not. Note that B.B.Newman provided a possible counter-example to the folk conjecture which motivates this paper \cite{pride1977twogensubgps}, and one can interpret Newman's construction as a possible counter-example to Question~\ref{Q:freevertices}. However, in Example~\ref{ex:2-genLoop} from Section~\ref{sec:examples} we prove that Newman's group is not a counter-example to Question~\ref{Q:freevertices}, and so not a counter-example to the JSJ-version of the folk conjecture.

\p{Overview of the paper}
In Section~\ref{sec:JSJDefn} we motivate and define JSJ-decompositions of hyperbolic groups. In Section~\ref{sec:onerelbground} we prove a key preliminary result, Lemma~\ref{lem:ends}, that relates the number of ends of a one-relator group with torsion $\<X; R^n\>$ to the number of ends of the associated torsion-free one-relator group $\<X; R\>$. In Section~\ref{sec:onerelsubgroups} we recall certain fundamental results on the subgroups of one-relator groups with torsion. In Section~\ref{sec:JSJ} we use the results of Sections~\ref{sec:onerelbground}~and~\ref{sec:onerelsubgroups}, as well as standard results on amalgams and HNN-extensions, to classify how a one-ended one-relator group can split as an HNN-extension (Lemma~\ref{lem:HNN}) or amalgam (Lemma~\ref{lem:amalg}) over a virtually-$\mathbb{Z}$ subgroup. These two lemmas combine to prove the Principal Lemma, Lemma~\ref{thm:generalJSJstructural}, which yields Theorem~\ref{thm:TheoremA}. In Section~\ref{sec:2Gen1Rel} we outline two results on the structure of two-generator, one-relator groups with torsion. These are Theorem~\ref{thm:JSJgraph}, which describes explicitly the possible JSJ-decompositions of such a group, and Theorem~\ref{thm:JSJdecomp}, which proves that such a group is either of the form $\< a, b; [a, b]^n\>$ or has virtually-cyclic outer automorphism group.
In Section~\ref{sec:examples} we give examples of one-relator groups with torsion which have non-trivial JSJ-decomposition. These examples imply that the results of this paper are not vacuous.


\section{JSJ-decompositions: motivation and definition}
\label{sec:JSJDefn}

JSJ-decompositions are important structural invariants of hyperbolic groups. One-relator groups with torsion are hyperbolic~\cite[Theorem IV.5.5]{L-S}, and this paper studies the JSJ-decompositions of one-relator groups with torsion. Therefore, in this section we motivate and give the definition of the JSJ-decomposition of a one-ended hyperbolic group. JSJ-decompositions encode the virtually-$\mathbb{Z}$ splittings of hyperbolic groups, so in Section~\ref{sec:JSJ} we investigate the virtually-$\mathbb{Z}$ splittings of one-ended one-relator groups with torsion, and this analysis yields the main result of this paper, Theorem~\ref{thm:TheoremA}.

Note that the definition of a ``one-ended hyperbolic group'' is not necessary to understand this paper, because a one-relator group with torsion is one-ended if and only if it does not admit a free group as a free factor~\cite{fischer1972one}. This observation also means that our restriction to the one-ended case does not weaken our solution to the folk-conjecture which motives this paper.

JSJ-decompositions are a certain kind of graph of groups splitting. We use $\bGamma$ to denote a graph of groups, with a connected underlying graph $\Gamma$, associated \emph{vertex groups} (or \emph{vertex stabilisers}) $\{G_v; v\in V_{\Gamma}\}$ and \emph{edge groups} (or \emph{edge stabilisers}) $\{G_e; e\in E_{\Gamma}, G_e=G_{\overline{e}}\}$, and set of monomorphisms $\theta_e: G_e\rightarrow G_{\iota(e)}$ \cite{trees}.

\p{Background on JSJ-decompositions} We now motivate the study of, and loosely define, JSJ-decompositions of hyperbolic groups. A more careful definition, due to Bowditch \cite{bowditch1998cut}, then follows.

Mirroring the JSJ-decomposition of $3$-manifolds \cite{hatcher2000notes}, the JSJ-decomposition of a one-ended hyperbolic group is a graph of groups decomposition where all edge groups are virtually-$\mathbb{Z}$, and is (in an appropriate sense) unique if certain vertices, called ``orbifold vertices'', are treated as pieces to be left intact and not decomposed \cite{bowditch1998cut}. Sela used these JSJ-decompositions to obtain results regarding the outer automorphism groups of torsion-free hyperbolic groups \cite{rips1997cyclic}, regarding the isomorphism problem for torsion-free hyperbolic groups \cite{sela1995isomorphism}, and regarding the model theory of all hyperbolic groups\cite{Sela2009}. Levitt used JSJ-decompositions to coarsely classify the outer automorphism groups of hyperbolic groups \cite{levitt2005automorphisms}.

Dahmani--Guirardel defined and used \emph{$\mathcal{Z}$-max JSJ-decompositions}, a modification of JSJ-decompositions where edge groups are virtually-$\mathbb{Z}$ with infinite center, to resolve the isomorphism problem for all hyperbolic groups \cite{dahmani2011isomorphism}. The main result of our paper, Theorem~\ref{thm:TheoremA}, implies that the $\mathcal{Z}$-max JSJ-decomposition of a one-ended one-relator group with torsion $G$ is simply the JSJ-decomposition of $G$.

\p{The definition of JSJ-decompositions} We shall now define the canonical JSJ-decomposition of a hyperbolic group. First, however, we define certain terms used in the definition. An \emph{elementary subgroup} of a hyperbolic group is a virtually-$\mathbb{Z}$ subgroup. A \emph{refinement} of a graph of groups is a splitting $\bfGamma_v$ of a vertex group $G_v$ over a subgroup $C\leq G_v$ which respects the graph of groups structure \cite{rips1997cyclic}, in the sense that images of adjacent edge groups in $G_v$ are
vertex groups in the splitting $\bfGamma_v$; if such a splitting exists then we say that $G_v$ can be \emph{refined over $C$}.

Let $G$ be a one-ended hyperbolic group which is not a Fuchsian group. Then a \emph{JSJ-decomposition} of $G$ is a splitting of $G$ as a graph of groups $\bGamma$ with three types of vertices.
\begin{enumerate}
\item\label{enum:elem} \emph{Elementary vertices}, whose group is a maximal elementary subgroup.
\item\label{enum:orb} \emph{Orbifold vertices}, whose group is a ``maximal hanging Fuchsian subgroup''.\footnote{We only need orbifold vertices to define JSJ-decompositions; they are not mentioned anywhere else in this paper. Therefore, the interested reader is referred to Bowditch \cite{bowditch1998cut} for a formal definition.}
\item\label{enum:rig} \emph{Rigid vertices}, whose group cannot be refined over an elementary subgroup.
\end{enumerate}
Every edge connects an elementary vertex to either a rigid or an orbifold vertex. Edge groups of orbifold vertices correspond to the peripheral subgroups of the hanging Fuchsian group. Finally, edge groups are maximal elementary in the corresponding rigid or orbifold vertex group.

A JSJ-decomposition of a hyperbolic group is unique up to conjugacy\footnote{The precise equivalence relation can be found in Bowditch's paper \cite[Section~6]{bowditch1998cut}.}\cite{bowditch1998cut} and so we can talk about \emph{the} JSJ-decomposition of a hyperbolic group.

\section{The ends of one relator groups}
\label{sec:onerelbground}

In this section we prove Lemma~\ref{lem:ends}, which relates the number of ends of a one-relator group with torsion $G=\<X; R^n\>$ to the structure of the related one-relator group \emph{without} torsion $\widehat{G}=\<X; R\>$. Lemma~\ref{lem:ends} is a key technical result in this paper and is applied in Lemma~\ref{lem:HNN} and Lemma~\ref{lem:amalg} from Section~\ref{sec:JSJ}, which are the two cases of the Principal Lemma, Lemma~\ref{thm:generalJSJstructural}.

\p{The group $\mathbf{\widehat{G}}$} Suppose $G$ is an arbitrary group. We define the group $\widehat{G}$ as the quotient of $G$ by the normal closure $T$ of the torsion elements, that is, $\widehat{G}:=G/T$. The following proposition is due to Karrass, Magnus and Solitar~\cite{karrass1960elements}. Recall that $R$ denotes a word which is not a proper power of any other word over $X^{\pm1}$.
\begin{proposition}
\label{prop:Ghat}
In a one relator group with torsion $G=\<X; R^n\>$ the group $\widehat{G}$ has presentation $\<X; R\>$ and is torsion free.
\end{proposition}
Lemma~\ref{lem:ends} relates the ends of $G=\<X; R^n\>$, $n>1$, to the structure of the group $\widehat{G}=\<X; R\>$.

\p{The ends of one-relator groups with torsion} A finitely-generated infinite group can have one, two or infinitely many ends, and the class of two-ended groups is precisely the class of virtually-$\mathbb{Z}$ groups. In this section we use the fact that a finitely-generated one-relator group with torsion has one end if and only if it does not admit a free group as a free factor, and has infinitely many ends otherwise~\cite{fischer1972one}. We also use the fact that a finitely-generated torsion-free group $G$ is infinitely ended if and only if it decomposes non-trivially as a free product, so $G=H\ast K$ where $H$ and $K$ are both non-trivial~\cite{stallings1968torsion}.

Lemma~\ref{lem:ends} generalises the following observation: a two-generator, one-relator group with torsion $G=\< a, b; R^n\>$ is infinitely ended if and only if the relator $R$ is contained in some free basis of $F(a, b)$, and $G$ is one-ended otherwise \cite{Pride1977}. Our generalisation uses the natural surjection from $G=\< X; R^n\>$, $n>1$, to $\widehat{G}=\< X; R\>$. The kernel of this map is the normal closure of the element $R$, $T=\<\< R\>\>$, by Proposition~\ref{prop:Ghat}. This subgroup $T=\<\<R\>\>$ plays an important role in Section~\ref{sec:JSJ}, where we prove the main results of this paper.

In the proof of Lemma~\ref{lem:ends} we use the term ``group with a single defining relation'' to mean a group with a presentation of the form $\<X; S\>$ where $|X|\geq1$, as opposed to $|X|>1$.

\begin{lemma}
\label{lem:ends}
Assume that $G=\<X; R^n\>$ is a finitely-generated one-relator group with torsion. Then $G$ is infinitely ended if and only if $\widehat{G}$ is either infinitely ended or infinite cyclic. Otherwise both $G$ and $\widehat{G}$ are one-ended.
\end{lemma}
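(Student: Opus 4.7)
The plan is to reduce both ``$G$ is infinitely ended'' and ``$\widehat{G}$ is infinitely ended or infinite cyclic'' to a common combinatorial condition on the relator $R$, and then to check the equivalence. The two standard facts flagged in the excerpt are the key inputs: Fischer's result that a finitely-generated one-relator group with torsion is infinitely ended if and only if it admits a free factor, and Stallings' theorem that a torsion-free finitely-generated group is infinitely ended if and only if it decomposes non-trivially as a free product. The central structural input is a classical theorem in one-relator theory (due to Shenitzer, appearing in Magnus--Karrass--Solitar): for a cyclically reduced $W \in F(X)$, the one-relator group $\<X; W\>$ is a non-trivial free product if and only if, after a suitable Nielsen transformation of $F(X)$, the element $W$ lies in a proper free factor $F(X_1)$ with $X_1 \subsetneq X$. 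Since this condition depends only on which generators $W$ involves (after Nielsen) and not on any exponent, it applies uniformly to both $R$ and $R^n$, giving a direct bridge between free splittings of $G$ and of $\widehat{G}$.

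For the forward direction, suppose $G$ is infinitely ended. By Fischer $G$ splits non-trivially as a free product, so applying the classical theorem to $\<X; R^n\>$ we may (after a Nielsen transformation) assume $R \in F(X_1)$ with $X_1 \subsetneq X$; writing $X_2 := X\setminus X_1$, this gives $G = \<X_1; R^n\> \ast F(X_2)$ and $\widehat{G} = \<X_1; R\> \ast F(X_2)$. If $\<X_1; R\> \neq 1$ then the decomposition of $\widehat{G}$ is non-trivial and Stallings gives $\widehat{G}$ infinitely ended. Otherwise $\<X_1; R\>=1$, and an easy abelianisation argument (using that $R$ is not a proper power) forces $|X_1|=1$ with $R$ a generator of $F(X_1)$, so $\widehat{G}=F(X_2)$; since $|X|\geq 2$, this is either $\mathbb{Z}$ (when $|X_2|=1$) or infinitely ended (when $|X_2|\geq 2$).

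For the converse, if $\widehat{G}$ is infinitely ended, Stallings yields a non-trivial free splitting, the classical theorem applied to $\<X; R\>$ puts $R\in F(X_1)$ with $X_1\subsetneq X$ after Nielsen, and then $G = \<X_1;R^n\>\ast F(X_2)$ with $F(X_2)\neq 1$ is a non-trivial free product, so $G$ is infinitely ended by Fischer. If $\widehat{G}\cong\mathbb{Z}$, then $\widehat{G}$ is free, and the associated classical Magnus criterion (a one-relator group $\<X;R\>$ is free if and only if $R$ is a primitive element of $F(X)$) forces $R$ primitive; after Nielsen-transforming to $R=x_1$ one gets $G=\mathbb{Z}/n \ast F_{|X|-1}$, which is a non-trivial free product as $|X|\geq 2$. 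For the ``otherwise'' clause, $G$ being one-ended is the Fischer-complement of ``infinitely ended'', so by the biconditional just proved $\widehat{G}$ is neither $\mathbb{Z}$ nor infinitely ended; since $\widehat{G}$ is torsion-free with non-trivial abelianisation (of rank $\geq |X|-1 \geq 1$) it is infinite, so the only remaining option is one-ended.

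The principal obstacle is the careful invocation of the Shenitzer--Magnus free-decomposition theorem for one-relator groups, and the ancillary handling of the boundary case where the Magnus free factor $\<X_1;R\>$ degenerates to the trivial group. That boundary case is precisely what allows $\widehat{G}$ to be $\mathbb{Z}$ rather than strictly infinitely ended, and it accounts for the slightly asymmetric shape of the statement; once it is isolated, the rest of the proof is essentially bookkeeping between the two one-relator presentations $G=\<X;R^n\>$ and $\widehat{G}=\<X;R\>$.
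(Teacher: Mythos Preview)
Your proposal is correct and follows essentially the same route as the paper: both arguments reduce the question to the classical Shenitzer/Magnus structural theorem (cited in the paper as \cite[Proposition~II.5.13]{L-S}) that a one-relator group splits freely if and only if the relator can be Nielsen-transformed into a proper subset of the generators, together with the primitive-element criterion for freeness (\cite[Proposition~II.5.10]{L-S}) to handle the $\widehat{G}\cong\mathbb{Z}$ case. The only cosmetic differences are that the paper normalises by assuming $R^n$ has minimal $\aut(F_m)$-length and runs the converse by contradiction, whereas you invoke the structural theorem directly and argue the converse constructively.
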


\begin{proof}
Let $G=\< x_1, x_2, \ldots, x_m; R^n\>$ be a one-relator group with torsion and without loss of generality we can assume that $R^n$ has minimal length in its $\aut(F_m)$ orbit.

Now, $G$ and $\widehat{G}$ are infinite (as we assume $m> 1$), so $G$ has either one-end or is a free product of a non-trivial free group and an indecomposable group with a single defining relation \cite{fischer1972one}.

Suppose $G$ is a free product of a nontrivial free group and an indecomposable group with a single defining relation. Then, relabeling the generators if necessary, there exists some $1\leq k<m$ such that $G\cong G_1\ast G_2$ where $G_1=\<x_1, \ldots, x_k; R^n\>$ is freely indecomposable and $G_2$ is free with basis $x_{k+1}, \ldots, x_m$ \cite[Proposition II.5.13]{L-S}. Thus, $\widehat{G}=\widehat{G}_1\ast{G}_2$. If $\widehat{G}_1$ is non-trivial then $\widehat{G}$ is infinitely ended. If $\widehat{G}_1$ is trivial then $\widehat{G}$ is free, and so is either infinite cyclic or has infinitely many ends, as required.

In order to prove the lemma, it is now sufficient to prove that if $\widehat{G}$ is infinitely ended or infinite cyclic then $G$ is infinitely ended. So, firstly suppose that $G=\< x_1, x_2, \ldots, x_m; R^n\>$, $n>1$, is one-ended but $\widehat{G}$ is infinitely ended. As $\widehat{G}$ is a one-relator group (note that it is possibly isomorphic to a free group) it can be decomposed as $G_1\ast G_2$ where $G_1=\<x_1, \ldots, x_k; R\rangle$ is a freely indecomposable group (possible trivial) with a single defining relation, and where $G_2$ is free. Because $R^n$ has minimal length in its $\aut(F_m)$ orbit, so has $R$. We can then apply the fact that $G$ is one-ended to get that $G_2$ is trivial, and so $\widehat{G}=G_1$ is freely indecomposable.
As $\widehat{G}$ is torsion-free and freely indecomposable it is not infinitely ended, a contradiction.
Secondly, suppose that $G=\<x_1, \ldots, x_m; R^n\>$, $n>1$, is one-ended but $\widehat{G}$ is two-ended, and again $R$ has minimal length in its $\aut(F_m)$ orbit. Then $\widehat{G}$ is free of rank one, because the only two-ended torsion-free group is the infinite cyclic group. Therefore, $R$ is primitive \cite[Proposition II.5.10]{L-S}. As $R$ has minimal length in $\aut(F_m)$, $G$ has presentation $\<x_1, \ldots, x_m; x_1^n\>$ and so cannot be one-ended, a contradiction.
\end{proof}

\section{The subgroups of one relator groups}
\label{sec:onerelsubgroups}

The purpose of this section is to state certain known results on the subgroups of one-relator groups with torsion which we apply in Section~\ref{sec:JSJ}. We also prove Lemma~\ref{lem:vcsubgroups}, which shows that, apart from the complications introduced by having elements of finite order, the virtually-$\mathbb{Z}$ subgroups of one-relator groups with torsion behave much like those of torsion-free hyperbolic groups (equivalently, of free groups). JSJ-decompositions encode the possible virtually-$\mathbb{Z}$ splittings of a hyperbolic group, and thus Lemma~\ref{lem:vcsubgroups} is a key technical result in this paper. We apply Lemma~\ref{lem:vcsubgroups} in Lemmas~\ref{lem:HNN}~and~\ref{lem:amalg} from Section~\ref{sec:JSJ}, which are the two cases of the Principal Lemma, Lemma~\ref{thm:generalJSJstructural}.

We begin by stating three known results, Propositions~\ref{prop:sbgpT},~\ref{prop:conjToR}~and~\ref{prop:vZsubgpsareZorD}, on the subgroups of one-relator groups with torsion. These results are used throughout Section~\ref{sec:JSJ}, while Proposition~\ref{prop:vZsubgpsareZorD} is also used in the proof of Lemma~\ref{lem:vcsubgroups}.

\p{The subgroup \boldmath{$T$}} Let $G=\langle X; R^n\rangle$ be a one-relator group with torsion. The first result we state relates to the structure of the subgroup $T:=\<\<R\>\>$, as introduced in Section~\ref{sec:onerelbground}. This subgroup $T$ plays an important role in Section~\ref{sec:JSJ}. The result is due to Fischer, Karrass and Solitar~\cite{fischer1972one}.
\begin{proposition}[Fischer--Karrass--Solitar, 1972]
\label{prop:sbgpT}
Assume that $G=\langle X; R^n\rangle$ is a one-relator group with torsion. Then the subgroup $T:=\langle\langle R\rangle\rangle$ is isomorphic to the free product of infinitely many cyclic groups of order~$n$.
\end{proposition}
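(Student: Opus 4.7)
The plan is to identify $T$ as the kernel of the natural surjection $\pi\colon G \twoheadrightarrow \widehat{G}$, where $\widehat{G} = \langle X; R\rangle$ is the associated torsion-free one-relator group, and then to compute this kernel directly. The approach is to lift the problem to the free group: writing $N := \langle\langle R \rangle\rangle_{F(X)}$ for the kernel of $F(X) \twoheadrightarrow \widehat{G}$, we may identify $T$ with the quotient $N / \langle\langle R^n \rangle\rangle_{F(X)}$. By the Nielsen--Schreier theorem $N$ is a free group, and the task reduces to understanding the quotient of this free group by the normal closure of the $F(X)$-conjugates of $R^n$.

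Concretely, I would work with the covering space interpretation. Let $K$ be the presentation $2$-complex of $G$, and consider the regular cover $Y \to K$ corresponding to $T$. Its deck group is $\widehat{G}$, its $1$-skeleton is the Cayley graph $\mathrm{Cay}(\widehat{G}, X)$, and its $2$-cells are the lifts of the unique $2$-cell of $K$. Because $R = 1$ in $\widehat{G}$, the word $R$ traces a closed loop $\gamma_g$ at every vertex $g$ of $Y$, and each $2$-cell of $Y$ is attached along some $\gamma_g^{\,n}$. I would then contract a maximal subtree of $\mathrm{Cay}(\widehat{G}, X)$ chosen to be disjoint from a set of representative $R$-loops; up to homotopy this turns $Y$ into a wedge of circles with $2$-cells attached, each along the $n$-th power of a distinct circle. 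Van Kampen then yields a presentation of $T$ as a free product of cyclic groups of order $n$, one per representative $R$-loop. The count of factors is infinite because $\langle R\rangle_G$ is cyclic of order $n$ while $G$ is infinite (as $|X| > 1$), so $\langle R\rangle_G$ has infinitely many $G$-conjugates and hence indexes infinitely many free factors.

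The main obstacle lies in the contraction step: one must verify that a spanning tree of $\mathrm{Cay}(\widehat{G}, X)$ can be chosen so that the $R$-loops become independent basis elements of the free group $\pi_1$ of the collapsed $1$-skeleton, and that the $2$-cell attachments introduce no relations beyond those of the form $\gamma^n = 1$. A delicate point is that distinct vertices $g \neq g'$ may produce $R$-loops $\gamma_g, \gamma_{g'}$ which coincide as unparametrized subgraphs, so that one must single out one representative per $\widehat{G}$-orbit of $R$-loops. A cleaner alternative would be to construct the action of $T$ on a tree directly via Bass--Serre-type theory applied to the natural decomposition $G \cong F(X) \ast_{\mathbb{Z}} \mathbb{Z}/n\mathbb{Z}$ (a pushout of groups), and conclude that the action has trivial edge stabilizers and $\mathbb{Z}/n\mathbb{Z}$-vertex stabilizers, yielding the desired free product.
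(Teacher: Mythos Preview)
The paper does not give a proof of this proposition at all: it is quoted as a known result of Fischer--Karrass--Solitar with a citation to \cite{fischer1972one}, and the text moves directly on to Lemma~\ref{lem:malnormalitytest}. So there is no in-paper argument to compare your proposal against; the relevant question is whether your sketch stands on its own.

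Your reduction is correct: with $N=\langle\langle R\rangle\rangle_{F(X)}$ one has $T\cong N/\langle\langle R^{n}\rangle\rangle_{F(X)}$, and the covering space $Y$ you describe has $1$-skeleton $\mathrm{Cay}(\widehat{G},X)$ with $2$-cells glued along $n$-th powers of $R$-loops. The obstacle you flag, however, is the entire content of the result. Saying ``choose a spanning tree so that the $R$-loops become independent basis elements of $\pi_1$ of the collapsed graph'' is exactly the statement that $N$ is \emph{freely} generated by a set of $F(X)$-conjugates of $R$; this is the Cohen--Lyndon theorem (1963), and it is not something one can arrange by a clever choice of tree---it is a genuine theorem about one-relator groups, proved via the Magnus hierarchy. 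Once Cohen--Lyndon is invoked, your argument finishes cleanly: every $F(X)$-conjugate of $R$ is an $N$-conjugate of one of the free generators $u_iRu_i^{-1}$, so $\langle\langle R^{n}\rangle\rangle_{F(X)}=\langle\langle (u_iRu_i^{-1})^{n}\rangle\rangle_{N}$ and $T\cong \ast_i C_n$. Without naming Cohen--Lyndon (or reproving it), the sketch has a gap at precisely the hard step.

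Your proposed Bass--Serre alternative does not work as written. The diagram $F(X)\leftarrow \mathbb{Z}\rightarrow \mathbb{Z}/n\mathbb{Z}$ is a pushout but not an amalgam, since the right-hand map is not injective; there is no Bass--Serre tree for such a pushout, and in particular no action of $G$ on a tree with the edge and vertex stabilisers you describe arises this way. Any tree action one might cook up for $T$ with $C_n$ vertex stabilisers and trivial edge stabilisers would again need, as input, the fact that distinct conjugates $gRg^{-1}$ generate ``independent'' copies of $C_n$---which is once more Cohen--Lyndon in disguise.
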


The second result we state implies that the finite cyclic free factors of $T$ are conjugate in $G$, although clearly they are not conjugate in the abstract group $T$. This conjugate/non-conjugate duality is applied in Lemmas~\ref{lem:ZsplitFPA}~and~\ref{lem:DsplitFPA} from Section~\ref{sec:JSJ}. These two lemmas combine to prove Lemma~\ref{lem:amalg}, which is one of the two cases of the Principal Lemma. The result is due to Karrass, Magnus and Solitar~\cite{karrass1960elements}.
\begin{proposition}[Karrass--Magnus--Solitar, 1960]
\label{prop:conjToR}
Assume that $G=\langle X; R^n\rangle$ is a one-relator group with torsion. If $g\in G$ is an element of finite order in $G$ then $g$ is conjugate to some power of the element $R$, so $h^{-1}gh=R^i$.
\end{proposition}

\p{Virtually-$\mathbb{Z}$ subgroups} The third result we state classifies the isomorphism classes of the virtually-$\mathbb{Z}$ subgroups of a one-relator group with torsion. The result is due independently to Karrass and Solitar~\cite{karrass1971subgroups}, and to Cebotar~\cite{cebotar1971subgroups}.

\begin{proposition}[Karrass--Solitar, Cebotar, 1971]
\label{prop:vZsubgpsareZorD}
If $C$ is a virtually-$\mathbb{Z}$ subgroup of a one-relator group with torsion then $C$ is either infinite cyclic or infinite dihedral.
\end{proposition}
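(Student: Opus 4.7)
The plan is to combine the classification of infinite two-ended groups with a malnormality property of $\<R\>$ in $G=\<X;R^n\>$. Recall that every infinite virtually-cyclic group $C$ is isomorphic either to a semidirect product $F\rtimes\mathbb{Z}$ with $F$ a finite normal subgroup (Type I), or to an amalgamated free product $F_1\ast_H F_2$ with $H$ finite and of index two in each of the finite groups $F_1$ and $F_2$ (Type II); in the Type II case $H$ is normal in $C$ and is the unique maximal finite normal subgroup of $C$.

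The key input will be Newman's lemma \NewmanCitation, which I would use to deduce that $\<R\>$ is malnormal in $G$. Since every torsion element of $G$ is conjugate to a power of $R$, every nontrivial finite subgroup $K\le G$ is cyclic and, after conjugating $C$ if necessary, may be assumed to lie inside $\<R\>$. Malnormality then forces $N_G(K)\le\<R\>$: any $g\in N_G(K)$ gives $g^{-1}\<R\>g\cap\<R\>\supseteq K\ne 1$, so malnormality of $\<R\>$ yields $g\in\<R\>$.

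Given a virtually-$\mathbb{Z}$ subgroup $C\le G$, I would split into the two cases above. In Type I, if the finite normal subgroup $F$ is nontrivial then $C\le N_G(F)$ lies, after conjugation, inside the finite cyclic group $\<R\>$, contradicting the infiniteness of $C$; hence $F=1$ and $C\cong\mathbb{Z}$. In Type II, the same argument applied to the normal subgroup $H$ rules out $H$ being nontrivial, so $H=1$; this forces $|F_1|=|F_2|=2$ (so that $F_1\ast_H F_2$ is still virtually-$\mathbb{Z}$), giving $C\cong\mathbb{Z}/2\ast\mathbb{Z}/2\cong D_\infty$.

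The main obstacle will be extracting the malnormality of $\<R\>$ in the form above from Newman's Lemma~2.1 (equivalently, showing that the normalizer in $G$ of any nontrivial subgroup of $\<R\>$ is contained in $\<R\>$); once this is in hand, the remainder is essentially bookkeeping with the two-ended classification.
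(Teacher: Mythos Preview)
The paper does not prove this proposition; it is quoted as a known result of Karrass--Solitar and Cebotar and cited without argument. So there is nothing to compare your route against, and the relevant question is whether your proposal stands on its own.

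Your strategy is sound and is in fact a standard way to recover the Karrass--Solitar/Cebotar statement: combine the classification of two-ended groups with the malnormality of $\langle R\rangle$. The malnormality of $\langle R\rangle$ in $\langle X;R^n\rangle$ is indeed available from Newman's work (and can also be read off from the Magnus--Karrass--Solitar theory), so your ``main obstacle'' is a genuine theorem you may invoke. Once you have it, the normalizer computation $N_G(K)\le\langle R\rangle$ for $1\ne K\le\langle R\rangle$ is exactly as you wrote, and the Type~I/Type~II analysis goes through.

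One small gap to tighten: the sentence ``since every torsion element of $G$ is conjugate to a power of $R$, every nontrivial finite subgroup $K\le G$ is cyclic'' is not an immediate inference---knowing that each element of $K$ is individually conjugate into $\langle R\rangle$ does not by itself force $K$ to be cyclic. You can fill this in either by invoking Fischer--Karrass--Solitar (the paper's Proposition~\ref{prop:sbgpT}) together with the Kurosh subgroup theorem, or you can bypass it entirely: in Type~I pick any $1\ne f\in F$, conjugate so that $f\in\langle R\rangle$, note that the infinite cyclic finite-index subgroup $Z\le C$ has some power $z^m$ centralising $f$ (since $\operatorname{Aut}(F)$ is finite), and then malnormality gives $z^m\in\langle R\rangle$, contradicting $|z^m|=\infty$. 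The Type~II case is handled identically with $H$ in place of $F$. Either patch makes your argument complete.
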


We now prove Lemma~\ref{lem:vcsubgroups}, which is a key technical result of this paper. The result classifies, using malnormality, those virtually-$\mathbb{Z}$ subgroups which are not contained in any infinite dihedral subgroup of $G$.
We will take $h^g$ to mean conjugation by $g$, $h^g:=g^{-1}hg$. A subgroup $M$ of a group $G$ is said to be \emph{malnormal} in $G$ if $M^g\cap M\neq 1$ implies that $g\in M$.

\begin{lemma}
\label{lem:vcsubgroups}
Assume that $C$ is a virtually-$\mathbb{Z}$ subgroup of a one-relator group with torsion $G$. Suppose that $C$ is not a subgroup of an infinite dihedral group. Then $C$ is a subgroup of a malnormal, infinite cyclic subgroup of $G$.
\end{lemma}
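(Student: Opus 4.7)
The plan is to reduce to showing that a certain maximal virtually-$\mathbb{Z}$ subgroup is malnormal, and to derive the malnormality by combining Lemma~\ref{lem:malnormalitytest} with standard facts about virtually cyclic subgroups of hyperbolic groups. By Proposition~\ref{prop:vZsubgpsareZorD}, every virtually-$\mathbb{Z}$ subgroup of $G$ is infinite cyclic or infinite dihedral, so the hypothesis on $C$ forces $C$ to be infinite cyclic. Since $G$ is hyperbolic, the infinite cyclic subgroup $C$ lies in a unique maximal virtually-$\mathbb{Z}$ subgroup $M$; applying Proposition~\ref{prop:vZsubgpsareZorD} once more to $M$ and using that $C$ is not contained in any infinite dihedral subgroup of $G$, we conclude that $M=\langle x\rangle$ is infinite cyclic. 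It therefore suffices to show that $M$ is malnormal in $G$.

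Suppose for contradiction that $M$ is not malnormal. Then there exist $y\in G\setminus M$ and nonzero integers $i,j$ with $y^{-1}x^iy=x^j$, and Lemma~\ref{lem:malnormalitytest} gives that $H:=\langle x,y\rangle$ is either cyclic or a (nontrivial) free product of cyclic groups. If $H$ is cyclic, then $H$ is virtually-$\mathbb{Z}$ and strictly contains $M$, contradicting the maximality of $M$; so by Grushko's theorem, since $H$ is $2$-generated, $H=C_1\ast C_2$ is a free product of two nontrivial cyclic groups. Now $x$ has infinite order so cannot be conjugate into a finite factor, and if $x$ were conjugate into an infinite cyclic factor then malnormality of the factors in $C_1\ast C_2$ would force the conjugator into the same conjugate of that factor, so that $y$ lies in it too and $H$ is cyclic, a contradiction. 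Thus $x$ has cyclic length $\ell\geq 2$ in the free-product normal form on $H$, so $x^k$ has cyclic length $|k|\ell$ for every nonzero $k$; since conjugate elements of a free product have equal cyclic length, the relation $y^{-1}x^iy=x^j$ yields $|i|\ell=|j|\ell$ and hence $|i|=|j|$.

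Consequently $j=\pm i$ and $y$ normalizes $\langle x^i\rangle$, so $H_i:=\langle x^i,y\rangle$ has the abelian normal subgroup $\langle x^i\rangle$ with cyclic quotient, and is therefore metabelian. Since soluble subgroups of the hyperbolic group $G$ are virtually-$\mathbb{Z}$, the subgroup $H_i$ is virtually-$\mathbb{Z}$ and contains the infinite cyclic $\langle x^i\rangle$; by the uniqueness of the maximal virtually-$\mathbb{Z}$ subgroup containing a given infinite cyclic subgroup of $G$, we conclude that $H_i\subseteq M$, which contradicts $y\notin M$. The main obstacle in this outline is the derivation of $|i|=|j|$ via the analysis of conjugate powers of $x$ inside the free product $C_1\ast C_2$; once this identity is in hand, the passage through the soluble subgroup $H_i$ together with the uniqueness of maximal virtually-$\mathbb{Z}$ subgroups closes the argument quickly.
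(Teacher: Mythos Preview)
Your argument is correct and mirrors the paper's proof: both reduce to a maximal infinite-cyclic $M=\langle x\rangle$, apply Lemma~\ref{lem:malnormalitytest} to $\langle x,y\rangle$ and use the conjugacy theorem in free products to force $|i|=|j|$, and then exploit the solubility of $H_i=\langle x^i,y\rangle$ together with the uniqueness of maximal virtually-$\mathbb{Z}$ subgroups to obtain a contradiction. The only cosmetic difference is in the endgame: the paper re-invokes Lemma~\ref{lem:malnormalitytest} on $H_i$ to identify it as infinite dihedral, whereas you bypass this by citing directly that soluble subgroups of a hyperbolic group are virtually cyclic.
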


\begin{proof}
As $G$ is hyperbolic every infinite cyclic subgroup of $G$ is contained in a unique maximal virtually-$\mathbb{Z}$ subgroup \cite{bowditch1998cut}. Moreover, in a hyperbolic group a maximal virtually-$\mathbb{Z}$ subgroup of $G$ which is torsion free is malnormal in $G$.
The result then follows from Proposition~\ref{prop:vZsubgpsareZorD}.
\end{proof}

\section{The JSJ-decompositions of one relator groups}
\label{sec:JSJ}
\label{sec:virtZsplit}

The results of this section show that, apart from the complications introduced by having elements of finite order, the possible JSJ-decompositions of one-ended one-relator groups with torsion are much like those of one-ended torsion-free hyperbolic groups.

More formally, in the Principal Lemma, Lemma~\ref{thm:generalJSJstructural}, we prove that, apart from possibly certain specific cases, every edge group of the JSJ-decomposition of a one-ended one-relator group with torsion $G$ is a subgroup of a malnormal infinite-cyclic group (this is precisely what happens in torsion-free hyperbolic groups). From this, Theorem~\ref{thm:TheoremA} follows quickly.

\begin{thmletter}
\label{thm:TheoremA}
Assume that $G=\<X; R^n\>$ is a one-ended one-relator group with torsion. Then precisely one non-elementary vertex group of the JSJ-decomposition of $G$ contains torsion; all other non-elementary vertex groups are torsion-free hyperbolic groups. If $v$ is an elementary vertex in the JSJ-decomposition of $G$ then one of the following occurs.
\begin{enumerate}
\item\label{item:EdgeGpStatement} The vertex $v$ has arbitrary degree, and the vertex group $G_v$ is a subgroup of a malnormal, infinite cyclic subgroup of $G$.
\item The vertex $v$ has degree one, and both the vertex group $G_v$ and the adjacent edge group $G_e$ are infinite dihedral.
\end{enumerate}
\end{thmletter}

\p{Proving the Principal Lemma} Theorem~\ref{thm:TheoremA} follows quickly from the Principal Lemma, Lemma~\ref{thm:generalJSJstructural}, and the proof of the Principal Lemma takes up the majority of this section, Section~\ref{sec:JSJ}. The issue that the lemma resolves is that, by Proposition~\ref{prop:vZsubgpsareZorD}, the edge groups in the JSJ-decomposition may be infinite dihedral. In order to prove the Principal Lemma we prove that if a one-ended, one-relator group with torsion $G=\<X; R^n\>$ splits as an amalgam or HNN-extension over a virtually-$\mathbb{Z}$ subgroup $C$ with non-virtually-$\mathbb{Z}$ edge group(s) then, in general, $C$ cannot be contained in the normal closure $T:=\<\<R\>\>$ of the element $R$, and so, by Lemma~\ref{lem:vcsubgroups}, $C$ is contained in a malnormal, infinite cyclic subgroup of $G$. Lemma~\ref{lem:HNN} proves the result for HNN-extensions while Lemma~\ref{lem:amalg} proves it for free products with amalgamation.

\subsection{No even torsion}
\label{sec:noEvenTor}

A weakened form of Theorem~\ref{thm:TheoremA} admits a quick proof for one-ended one-relator groups with torsion which contain no even torsion. We state and prove this result, Theorem~\ref{thm:TheoremB}, here. Note that Theorem~\ref{thm:TheoremB} is only of interest because it admits a short proof: Theorem~\ref{thm:TheoremA} is a stronger result which holds for these groups.

\begin{thmletter}
\label{thm:TheoremB}
Assume that $G=\<X; R^n\>$ is a one-ended one-relator group with torsion, and suppose $n$ is odd. Then precisely one non-elementary vertex group of the JSJ-decomposition of $G$ contains torsion; all other non-elementary vertex groups are torsion-free hyperbolic groups.
\end{thmletter}

\begin{proof}
Every edge group of the JSJ-decomposition of such a group $G$ is infinite-cyclic, by Proposition~\ref{prop:vZsubgpsareZorD}. Note that $R$ is contained in a conjugate of a vertex group $G_v$ in the JSJ-decomposition of $G$, and suppose that $g$ has finite order in $G$ but is contained in a different vertex group of the JSJ-decomposition, so $g\in G_w$ with $v\neq w$. Then, by Proposition~\ref{prop:conjToR}, $g$ is conjugate to an element of $G_v$, so $h^{-1}gh\in G_v$. Hence, $g\in G_{hv}$ with $hv\neq w$. However, then $g$ fixes the geodesic between $w$ and $hv$, and hence some edge stabiliser contains torsion, a contradiction.
\end{proof}

\subsection{The proof of the Principal Lemma}

Theorem~\ref{thm:TheoremA} follows quickly from the Principal Lemma, Lemma~\ref{thm:generalJSJstructural}. We begin this section with Lemma~\ref{lem:HNN}, which proves the Principal Lemma for HNN-extensions. The proof of this case is short. The case of free products with amalgamation is more involved, and the subsequent four lemmas,
Lemmas~\ref{lem:AsbgpT}--\ref{lem:amalg}, prove this case of the Principal Lemma. We then state and prove the Principal Lemma.

Recall that if $G$ is a group and $T$ is the normal closure of its torsion elements then we write $\widehat{G}:=G/T$. If $G=\<X; R^n\>$ is a one-relator group with torsion then $T:=\<\<R\>\>$ and $\widehat{G}\cong\<X; R\>$ is a torsion-free one-relator group, by Proposition~\ref{prop:Ghat}.

\p{HNN-extensions} We shall now prove the Principal Lemma for HNN-extensions.

\begin{lemma}
\label{lem:HNN}
Assume that $G=\<X; R^n\>$ is a one-ended one-relator group with torsion which splits as an HNN-extension $G\cong H\ast_{A^t=B}$ where $A$ and $B$ are virtually-$\mathbb{Z}$ groups. Then $A, B\not\leq T=\<\<R\>\>$ and so $A$ and $B$ are subgroups of malnormal, infinite cyclic subgroups of $G$.
\end{lemma}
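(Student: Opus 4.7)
The plan is to argue by contradiction. Suppose $A \leq T$; since $T$ is normal in $G$, also $B = A^t \leq T$, and so it suffices to rule out $A \leq T$. The central idea is to restrict the $G$-action on the Bass--Serre tree $T_{BS}$ of the HNN-splitting to $T$ and derive from this a graph-of-groups decomposition of $T$ incompatible with its description in Proposition~\ref{prop:sbgpT}.

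Consider the Bass--Serre tree $T_{BS}$ of $G = H\ast_{A^t=B}$ and the restricted $T$-action on it. Two cases arise. \emph{Case 1:} $T$ fixes a vertex of $T_{BS}$. After a change of base we may take $T \leq H$, and normality of $T$ in $G$ gives $T = tTt^{-1} \leq tHt^{-1}$, hence $T \leq H \cap tHt^{-1} = A$ by the standard HNN intersection identity. This contradicts Proposition~\ref{prop:sbgpT}: $T$, being a free product of infinitely many cyclic groups of order $n$, has infinitely many ends and in particular is not virtually-$\mathbb{Z}$, whereas $A$ is. \emph{Case 2:} $T$ has no global fixed vertex. Then the Kurosh-type subgroup theorem for HNN-extensions realizes $T$ as the fundamental group of a graph of groups on the quotient of its minimal $T$-invariant subtree of $T_{BS}$, with edge groups $T \cap gAg^{-1} = gAg^{-1}$ (the equality using $A \leq T$), which are infinite virtually-$\mathbb{Z}$. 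However, a free product of finite cyclic groups has a canonical ``tree of finite groups'' structure with trivial edge groups, and any Bass--Serre decomposition of such a group is obtained by refinement of this one, so must have trivial edge groups. This contradiction establishes $A, B \not\leq T$.

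For the second conclusion, I would combine $A, B \not\leq T$ with Lemma~\ref{lem:vcsubgroups}. Every torsion element of $G = \<X; R^n\>$ is a conjugate of a power of $R$, and therefore lies in $T$; hence every infinite dihedral subgroup of $G$, being generated by its two order-two elements, is contained in $T$. Since $A \not\leq T$, the subgroup $A$ is not contained in any infinite dihedral subgroup of $G$, and Lemma~\ref{lem:vcsubgroups} yields that $A$ is contained in a malnormal infinite cyclic subgroup of $G$; the same argument handles $B$.

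The main obstacle I anticipate is the structural claim invoked in Case~2 above: that the free product $T = \ast_i \mathbb{Z}/n$ admits no Bass--Serre decomposition with infinite edge groups. For finitely generated virtually free groups this is a consequence of Dunwoody's accessibility theorem, but here $T$ is infinitely generated, so a direct argument is needed. A natural route is to take an infinite-order element of a putative edge group, use the Kurosh subgroup theorem to describe its centralizer in $T$, and exploit the tension between its elliptic action on the alleged tree and its necessarily hyperbolic action on the canonical Bass--Serre tree of $T$ (where each $\mathbb{Z}/n$ factor fixes a vertex and edge stabilizers are trivial).
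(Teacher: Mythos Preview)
Your approach via the restricted $T$-action on the Bass--Serre tree is genuinely different from the paper's, and Case~2 contains an actual error rather than merely a gap. You assert that any graph-of-groups decomposition of $T \cong \ast_i C_n$ must arise as a refinement of the canonical free-product decomposition and hence have trivial edge groups. This is false: for example, if $n=2$ and $T = \langle s_1, s_2, s_3, \ldots \rangle$ with each $s_i$ of order~$2$, then setting $D = \langle s_1, s_2\rangle \cong D_\infty$ one has the non-trivial amalgam
\[
T \;=\; \bigl(D \ast \langle s_3\rangle\bigr) \ast_D \bigl(D \ast \langle s_4, s_5, \ldots\rangle\bigr)
\]
over an infinite dihedral edge group. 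So nothing in the abstract structure of $T$ forbids a splitting over a virtually-$\mathbb{Z}$ subgroup, and your proposed repair---comparing elliptic versus hyperbolic behaviour of an infinite-order edge-group element across two trees---does not close the gap, since an element may well be elliptic for one tree action and hyperbolic for another without contradiction. Your Case~1 argument, by contrast, is correct.

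The paper sidesteps all of this by passing to the quotient $\widehat{G} = G/T$ rather than analysing the $T$-action. If $A \leq T$ then $B \leq T$ as well; since $A$ and $B$ become trivial in $\widehat{G}$, the HNN relations are vacuous there and the image of the stable letter $t$ generates a free $\mathbb{Z}$-factor of $\widehat{G}$. Lemma~\ref{lem:ends} then forces $G$ to be infinitely-ended, contradicting the one-endedness hypothesis. This is both shorter and avoids any delicate structural claim about splittings of the infinitely generated group $T$. Your deduction of the second conclusion from $A, B \not\leq T$ via Lemma~\ref{lem:vcsubgroups} (using that every infinite dihedral subgroup lies in $T$) is correct and in fact spells out a step the paper leaves implicit.
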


\begin{proof}
Suppose, without loss of generality, that $A\leq T$. We further have that $B\leq T$, as $G=\langle H, t; A^t=B\rangle$ so $B$ is contained in the normal closure of $A$. On the other hand, $t\not\in T$ as $T$ is the normal closure of a torsion element, and so of an element contained in (a conjugate of) $H$. Therefore, $\widehat{G}$ is the free product $\widehat{H}\ast\langle t\rangle$ where $\widehat{H}$ is obtained from $H$ by quotienting out the normal closure of the torsion elements of $H$. Thus, by Lemma~\ref{lem:ends}, $G$ is infinitely ended, a contradiction. So $A, B\not\leq T$ and the result follows from Lemma~\ref{lem:vcsubgroups}.
\end{proof}

\p{Amalgams} We prove, in Lemma~\ref{lem:amalg}, the analogous result to Lemma~\ref{lem:HNN} for free products with amalgamation. Lemma~\ref{lem:amalg} states that if $G$ is a one-ended one-relator group with torsion and if $G=A\ast_CB$ where $C$ is virtually-$\mathbb{Z}$ then either $C\not\leq T$, as in Lemma~\ref{lem:HNN}, or $C$ and one of $A$ or $B$ is infinite dihedral.
The proof of Lemma~\ref{lem:amalg} comprises Lemma~\ref{lem:AsbgpT}, which, supposing $C\leq T$, gives a form for one of the factor groups $A$ or $B$, Lemma~\ref{lem:ZsplitFPA}, which investigates when $C$ is infinite cyclic, and Lemma~\ref{lem:DsplitFPA}, which investigates when $C$ is infinite dihedral.

Note that Lemmas~\ref{lem:AsbgpT}~and~\ref{lem:ZsplitFPA} include the case of $\mathbb{Z}\ast C_n$. This inclusion allows methods of Kapovich--Weidmann to be applied to obtain a coarse description of the outer automorphism groups of two-generator, one-relator groups with torsion. This application is outlined in Section~\ref{sec:2Gen1Rel}.

We begin our proof of Lemma~\ref{lem:amalg} by giving a form for one of the factor groups $A$ or $B$ of $G=A\ast_CB$ when the the amalgamating subgroup $C$ is subgroup of $T:=\<\< R\>\>$. A splitting $A\ast_CB$ is called \emph{non-trivial} if $C\lneq A, B$.

\begin{lemma}
\label{lem:AsbgpT}
Assume that $G=\<X; R^n\>$ is a one-relator group with torsion which is either one-ended or isomorphic to $\mathbb{Z}\ast C_n$. Suppose that $G$ splits non-trivially as a free product with amalgamation $G\cong A\ast_{C}B$ where $C$ is a subgroup of $T=\<\<R\>\>$. Then either $A\leq T$ or $B\leq T$. If $A\leq T$ (the case of $B\leq T$ is analogous) then $A\cong F_m\ast A_{1}\ast A_{2}\ast\ldots$ with each $A_i=\<a_i\>$ non-trivial cyclic of order $n_i$ dividing $n$ and $F_m$ is free of rank $m\geq 0$. There may be only finitely many $A_i$, and indeed there may be none.
\end{lemma}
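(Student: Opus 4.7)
The plan is to exploit the explicit structure of $T$ given by Proposition~\ref{prop:sbgpT} together with the standard fact that in an amalgam $A \ast_C B$ every element of finite order is conjugate into one of the factors. Since $R$ has finite order $n>1$ in $G$, some conjugate $R' = gRg^{-1}$ lies in $A$ or $B$, and by symmetry I may assume $R' \in A$. Since $\<\<R'\>\>_G = \<\<R\>\>_G = T$, we have $R' \in T \cap A$, and therefore $T \leq \<\<T \cap A\>\>_G$; the reverse inclusion is obvious, so in fact $T$ is the normal closure in $G$ of $(T \cap A) \cup (T \cap B)$.

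I next invoke the standard consequence of the universal property of amalgamated free products: if a normal subgroup $N$ of $A \ast_C B$ is normally generated by elements drawn from $A \cup B$, then
\[
(A \ast_C B)/N \;\cong\; \bigl(A/(N \cap A)\bigr) \ast_{C/(N \cap C)} \bigl(B/(N \cap B)\bigr).
\]
Applied with $N = T$, and using that $C \leq T$ forces the amalgamated subgroup $C/(T \cap C)$ to be trivial, this yields a \emph{free product} decomposition
\[
\widehat{G} \;\cong\; \widehat{A} \ast \widehat{B}, \qquad \widehat{A} := A/(T \cap A), \;\; \widehat{B} := B/(T \cap B).
\]
Now Lemma~\ref{lem:ends} does the work: if $G$ is one-ended then $\widehat{G}$ is one-ended, hence freely indecomposable since it is torsion-free; and if $G \cong \mathbb{Z} \ast C_n$ then a direct calculation from the presentation $\<a,b;b^n\>$ gives $\widehat{G} \cong \mathbb{Z}$, which is again freely indecomposable. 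In either case the decomposition $\widehat{G} = \widehat{A} \ast \widehat{B}$ must be trivial, so without loss of generality $\widehat{A} = 1$, equivalently $A \leq T$. This establishes the first assertion of the lemma.

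Finally, assuming $A \leq T$, I apply the Kurosh subgroup theorem to the inclusion $A \leq T \cong C_n \ast C_n \ast \cdots$ provided by Proposition~\ref{prop:sbgpT}. Kurosh decomposes $A$ as a free product $F_m \ast (\ast_\alpha H_\alpha)$, where $F_m$ is free of some rank $m \geq 0$ and each $H_\alpha$ is a non-trivial subgroup of a conjugate (inside $T$) of one of the $C_n$ factors. Since every non-trivial subgroup of $C_n$ is cyclic of some order dividing $n$, relabeling the $H_\alpha$ as $A_1, A_2, \ldots$ yields the claimed form $A \cong F_m \ast A_1 \ast A_2 \ast \cdots$; Kurosh places no constraint on how many $A_i$ appear, so the collection may be empty, finite, or infinite. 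The only mildly subtle ingredient is the ``quotient of an amalgam'' isomorphism used in the second paragraph, but this is a routine application of the universal property of amalgamated products and introduces no new group-theoretic ideas; everything else follows by a direct citation of Lemma~\ref{lem:ends}, Proposition~\ref{prop:sbgpT}, and Kurosh.
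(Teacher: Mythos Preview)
Your argument is correct and follows the same route as the paper: pass to $\widehat G=G/T$, obtain a free-product decomposition $\widehat G=\widehat A\ast\widehat B$ because $\widehat C=1$, use Lemma~\ref{lem:ends} (or the direct computation for $\mathbb{Z}\ast C_n$) to force one factor to be trivial, and then apply Kurosh together with Proposition~\ref{prop:sbgpT}. The only difference is that you supply an explicit justification for the isomorphism $\widehat G\cong\widehat A\ast\widehat B$ --- first using the conjugacy theorem for amalgams to place a conjugate of $R$ inside a factor (so $T$ is normally generated by elements of $A\cup B$), and then invoking the universal property to identify the quotient of the amalgam --- whereas the paper simply asserts this isomorphism without comment; your extra paragraph is a genuine clarification rather than a different method.
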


\begin{proof}
Let $\widehat{A}$ and $\widehat{B}$ be the images of $A$ and $B$ in $\widehat{G}=G/T$. Note that the image $\widehat{C}$ of $C$ in $\widehat{G}$ is trivial, $\widehat{C}=1$, and so $\widehat{G}=\widehat{A}\ast \widehat{B}$. If $G$ is one-ended then one of the factors $\widehat{A}$ or $\widehat{B}$ must be trivial, by Lemma~\ref{lem:ends}. If $G\cong\mathbb{Z}\ast C_n$ then $\widehat{G}$ is infinite cyclic so again one of the factors $\widehat{A}$ or $\widehat{B}$ must be trivial. Thus, either $A\leq T$ or $B\leq T$.

As $T$ is the free product of cyclic subgroups of order $n$, by Proposition~\ref{prop:sbgpT}, the result follows from the Kurosh Subgroup Theorem.
\end{proof}

Now, if $G=A\ast_CB$ is a one-relator group with torsion and the amalgamating subgroup $C$ is virtually-$\mathbb{Z}$ then either $C$ is infinite cyclic or infinite dihedral, by Proposition~\ref{prop:vZsubgpsareZorD}. We shall now, in Lemma~\ref{lem:ZsplitFPA}, investigate the case when $C$ is infinite cyclic, while in Lemma~\ref{lem:DsplitFPA}, below, we investigate the case when $C$ is infinite dihedral.

We shall write $C_A$ (resp. $C_B$) for the copy of $C$ in $A$ (resp. $B$), and so $G=A\ast_{C_A=C_B}B$.
If $A$ is a (not necessarily proper) subgroup of a group $G$ we define the \emph{$A$-normal closure} of a set $S\subset A$, denoted $\<\<S\>\>_A$, to be the normal closure of the set $S$ in the abstract group $A$, as opposed to in $G$. Recall that a splitting $A\ast_CB$ is called {non-trivial} if $C\lneq A, B$.

\begin{lemma}
\label{lem:ZsplitFPA}
Assume that $G=\<X; R^n\>$ is a one-relator group with torsion which is either one-ended or isomorphic to $\mathbb{Z}\ast C_n$. Suppose that $G$ splits non-trivially as a free product with amalgamation $G\cong A\ast_{C}B$ where $C$ is infinite cyclic. Then $C$ is not a subgroup of $T=\<\<R\>\>$.
\end{lemma}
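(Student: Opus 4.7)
The plan is a proof by contradiction. Assume $C\leq T$. By Lemma~\ref{lem:AsbgpT} we may assume $A\leq T$, and by Proposition~\ref{prop:sbgpT} together with the Kurosh Subgroup Theorem, $A\cong F_m\ast A_1\ast A_2\ast\cdots$ is a free product of cyclic groups with each $A_i$ finite cyclic of order dividing $n$. Since $C\lneq A$ is infinite cyclic, $A$ is infinite, so either $A$ admits a non-trivial free product decomposition $A=P\ast Q$ with both $P$ and $Q$ non-trivial, or $A\cong\mathbb{Z}$.

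In the first case, I would use Bass--Serre refinement. If $C$ is conjugate (in $A$) into one of the factors, say $P$, then $G=(P\ast Q)\ast_C B$ can be rewritten as $G=(P\ast_C B)\ast Q$, exhibiting $G$ as a non-trivial free product. This contradicts $G$ being one-ended; and a direct analysis (using that $\mathbb{Z}\ast C_n$ is virtually free, hence admits splittings only over finite subgroups) rules out the $\mathbb{Z}\ast C_n$ case. If instead $C$ acts hyperbolically on the Bass--Serre tree of $A=P\ast Q$, I would refine $G$'s Bass--Serre tree using $A$'s decomposition to produce a graph of groups decomposition of $G$ whose underlying graph contains a loop; this gives $G$ an HNN-extension structure whose edge group is a vertex stabilizer from $A$'s tree. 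Choosing $A=P\ast Q$ atomically (one Kurosh factor at a time) makes this edge group cyclic, hence virtually-$\mathbb{Z}$, and contained in $A\leq T$, so Lemma~\ref{lem:HNN} furnishes the contradiction.

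The residual case $A\cong\mathbb{Z}$ is the main obstacle, since $A$ then admits no non-trivial free product decomposition to refine. Writing $A=\<a\>$ and $C=\<a^k\>$ for some $k\geq 2$, we have $a\in T$. The plan is to analyze the action of the normal subgroup $T$ on the Bass--Serre tree $\mathcal{T}$ of $G=A\ast_C B$: since $A\leq T$ and $C\leq T$, an orbit computation (using $\widehat{A}=\widehat{C}=1$ in $\widehat{G}=\widehat{B}$) shows that the quotient graph $T\backslash\mathcal{T}$ is a star, which induces a graph of groups decomposition of $T$ with torsion-free infinite cyclic leaf vertex groups (conjugates of $A\cong\mathbb{Z}$) and non-trivial infinite cyclic edge groups (conjugates of $C$). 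The hard part will be deriving the contradiction from this, since by Proposition~\ref{prop:sbgpT} we know $T$ is a free product of finite cyclic groups of order $n$ and hence admits only highly restricted graph-of-groups decompositions: in particular, the torsion-free infinite cyclic leaf vertex groups cannot arise in any such decomposition, producing the final contradiction.
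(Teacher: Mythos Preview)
Your approach is different from the paper's and has a genuine gap in the hyperbolic subcase of Case~1. Refinement of a graph of groups by blowing up a vertex along a splitting of its vertex group requires the incident edge groups to be \emph{elliptic} in that splitting; this is precisely what fails when $C$ acts hyperbolically on the Bass--Serre tree of $A=P\ast Q$. There is then no vertex of $\mathcal{T}_A$ to which the $B$-side can be attached, and the construction you describe does not produce a graph-of-groups decomposition of $G$. Your assertion that one obtains an HNN-extension of $G$ with a cyclic edge group contained in $T$ is therefore unjustified. (For a concrete obstruction, take $A=C_2\ast C_2$ and $C$ the index-two infinite cyclic subgroup: no refinement of $G=A\ast_C B$ along $A=\langle x\rangle\ast\langle y\rangle$ exists, and no HNN-structure on $G$ is produced by this data.)

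Your Case~2 is close to working, but the final step needs an actual argument rather than an appeal to a general principle that does not hold. The star decomposition of $T$ you describe is correct; to finish, collapse to $T=T''\ast_{C}A$ with $A\cong\mathbb{Z}$ and $[A:C]=k\geq 2$, and observe that $T$ is generated by its torsion elements (being a free product of finite cyclic groups) while every torsion element of $T''\ast_{C}A$ lies in a conjugate of $T''$. Hence $T/\langle\langle T''\rangle\rangle_T$ is simultaneously trivial and isomorphic to $A/\langle\langle C\rangle\rangle_A\cong\mathbb{Z}/k$, giving the contradiction. You should make this explicit.

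For comparison, the paper avoids the refinement issue entirely by a different case division: whether $R$ is conjugate into $A$ or into $B$. It then uses the conjugacy theorem for amalgamated free products together with short normal-closure and deficiency arguments. For instance, if $R$ is conjugate into $B$ and $A$ is torsion-free, one shows $A=\langle\langle C_A\rangle\rangle_A$ (by passing to $G/\langle\langle C\rangle\rangle_G=\bar A\ast\bar B$ and noting $\bar A\leq\langle\langle\bar B\rangle\rangle$), whence $A\cong F_m$ is the normal closure of a single element, forcing $m\leq 1$ and $C_A=A$; if $A$ has torsion one locates a finite-order $a_i$ inside a conjugate of the torsion-free group $C_A$. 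This handles all shapes of $A$ uniformly, with no Bass--Serre refinement and no appeal to Lemma~\ref{lem:HNN}.
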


\begin{proof}
Suppose that $C$ is infinite cyclic and contained in $T$, and we shall find a contradiction. As $C\leq T$, we can apply Lemma~\ref{lem:AsbgpT} to get that, without loss of generality, $A\leq T$ and $A\cong F_m\ast A_{1}\ast A_{2}\ast\ldots$ with each $A_i=\<a_i\>$ non-trivial cyclic of order $n_i$ dividing $n$ and $F_m$ is free of rank $m\geq0$. We have two cases: either the root $R$ of the relator $R^n$ is contained in a conjugate of $A$ or is contained in a conjugate of $B$.

Let us consider the second case: Suppose $R\in g^{-1}Bg$. If $A$ contains torsion, so $A_1=\<a_1\>$ is non-trivial, then $a_1$ is conjugate in $G$ to a power of $gRg^{-1}\in B$, by Proposition~\ref{prop:conjToR}, and by the conjugacy theorem for free products with amalgamation \cite[Theorem 4.6]{mks} we have that $a_1$ is conjugate to an element of $C_A$, a contradiction as $C_A$ is torsion-free.
If $A$ is torsion-free then $A\cong F_m$ is the $A$-normal closure of $C_A\cong\mathbb{Z}$, as $A\leq T$, and so the abstract group $A$ is the normal closure of a single element. As a group with more generators than relators is infinite, we have that $m=1$, and indeed $C_A=A$, a contradiction. Hence, if $C_A\cong C$ is infinite cyclic then $R$ cannot be contained in a conjugate of $B$.

Let us consider the first case: Suppose that $R\in g^{-1}Ag$, and by re-writing $R$ we can assume that $R\in A$ and indeed that $R=a_1$, where $A_1=\<a_1\>$. If $A_i$, $i>1$, is non-trivial then the generator $a_i$ of $A_i$ is conjugate to a power of $a_1$, by Proposition~\ref{prop:conjToR}, and so there exists some $k\in\mathbb{Z}$ such that $a_1^k$ and $a_i$ are conjugate in $G$ but not in $A$. Therefore, by the conjugacy theorem for free products with amalgamation, $a_1^k$ and $a_i$ are both contained in conjugates of the amalgamating subgroup $C_A$, a contradiction as $C_A$ is torsion-free. Thus, $A\cong F_m\ast A_1$ where $F_m$ is free of rank $m\geq 1$ and $A_1$ is finite cyclic ($F_m$ cannot be trivial as $G$ is one-ended).

Now, as the subgroup $T$ is the $G$-normal closure of $R=a_1$, and because $A$ is a subgroup of $T$, we have that $C_A$ intersects the $A$-normal closure of $a_1$ non-trivially, $\<\<a_1\>\>_A\cap C_A\neq 1$. Then, as $A/\<\<a_1\>\>_A\cong F_m$ is torsion-free and because $C_A$ is infinite cyclic, we have that $C_A\leq\<\<a_1\>\>_A$. On the other hand, as $A$ is contained in the $G$-normal closure of $a_1$ we have that the $A$-normal closure of $a_1$ and $C_A$ must be the whole of $A$, $\<\<a_1, C_A\>\>_A=A$. Then, as $\<\< a_1, C_a\>\>_A=\<\<a_1\>\>_A$, the group $A\cong F_m\ast A_1$ is the normal closure of a single element, and so $m=0$, a contradiction. Hence, if $C_A\cong C$ is infinite cyclic then $R$ cannot be contained in a conjugate of $A$.

Combining the cases, we conclude that if $C$ is infinite cyclic then $R$ cannot be contained in a conjugate of $A$ or of $B$, a contradiction.
\end{proof}

We shall now analyse how a one-ended one-relator group with torsion can split as a free-product with amalgamation over an infinite dihedral group. This, combined with Lemma~\ref{lem:ZsplitFPA}, shall prove Lemma~\ref{lem:amalg}, which proves the case of free products with amalgamation in the Principal Lemma.

If $A$ is a (not necessarily proper) subgroup of a group $G$ we say two elements $g, h\in A\leq G$ are \emph{$A$-conjugate} if there exists an element $k\in A$ such that $k^{-1}gk=h$.
Recall that a splitting $A\ast_CB$ is called {non-trivial} if $C\lneq A, B$.
We shall again use $C_A$ to denote the copy of the amalgamating subgroup $C$ contained in the factor group $A$.

\begin{lemma}
\label{lem:DsplitFPA}
Assume that $G=\<X; R^n\>$ is a one-ended one-relator group with torsion. Suppose that $G$ splits non-trivially as a free product with amalgamation $G\cong A\ast_{C}B$ where $C$ is infinite dihedral. Then either $A$ or $B$ is infinite dihedral.
\end{lemma}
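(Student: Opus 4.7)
The plan is to exploit the torsion in $C = D_\infty$: first show that $C$ must lie in $T = \langle\langle R\rangle\rangle$, then apply Lemma~\ref{lem:AsbgpT} to pin down the free-product structure of one of the vertex groups, and finally combine the conjugacy theorem for amalgams with a normal-closure identity to conclude that this vertex is $D_\infty$.

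First, $C \leq T$: the two involutions generating $C = D_\infty$ are $G$-conjugate to $R^{n/2}$ (which in particular forces $n$ to be even), and so lie in the normal subgroup $T$. Applying Lemma~\ref{lem:AsbgpT}, I obtain (without loss of generality) $A \leq T$ and $A \cong F_m \ast A_1 \ast A_2 \ast \cdots$ with each $A_j$ cyclic of order dividing $n$. The conjugacy theorem for amalgamated free products, applied to the torsion generator of each $A_j$, then forces each $A_j \cong C_2$, except possibly a single distinguished factor $A_1 = \langle R\rangle \cong C_n$ that can occur only when $R$ is $G$-conjugate into $A$ itself.

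The inclusion $A \leq T \leq \langle\langle R, C\rangle\rangle_G$ together with the amalgamated-product identity $A \cap \langle\langle R, C\rangle\rangle_G = \langle\langle R, C_A\rangle\rangle_A$ gives the key identity $A = \langle\langle R, C_A\rangle\rangle_A$. Setting the quotient $A/\langle\langle R, C_A\rangle\rangle_A$ equal to the trivial group and applying Kurosh's theorem (each of $s, t$ is $A$-conjugate into at most one factor of $A$), I conclude that $F_m$ must be trivial and that $A$ can retain at most two $C_2$ factors. Since $s, t$ generate a non-abelian $D_\infty$ they cannot both lie in a single abelian factor. In the case where $R$ is $G$-conjugate into $B$, this yields $A \cong C_2 \ast C_2 = D_\infty$ directly.

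The main obstacle is the remaining case, where $R$ is $G$-conjugate into $A$ with $n > 2$. Here the conjugacy theorem, now applied to the involution $R^{n/2} \in A_1$, rules out the sub-cases where both $s$ and $t$ lie in $C_2$-factors distinct from $A_1$ (since $R^{n/2}$ could not be $A$-conjugate to anything in $C_A$, violating the theorem), and leaves $A \cong C_n \ast C_2$ as the only candidate not already equal to $D_\infty$. Ruling this residual case out — the principal technical hurdle — should follow from a Tietze reduction of the amalgamated-product presentation of $G$, which uses the relations $a^{n/2} = s_B$ and $c = t_B$ to eliminate the amalgam, exhibiting a free factor of $G$ and thereby contradicting the one-endedness hypothesis; once this case is eliminated we conclude $A \cong D_\infty$ in all cases.
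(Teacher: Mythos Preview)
Your overall strategy matches the paper's: show $C\le T$, apply Lemma~\ref{lem:AsbgpT}, force the finite free factors of $A$ to be $C_2$ (with at most one $C_n$ when $R$ lies in $A$), and split according to where $R$ sits. Two points need attention.

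First, the ``amalgamated-product identity'' $A\cap\langle\langle R,C\rangle\rangle_G=\langle\langle R,C_A\rangle\rangle_A$ is not a standard fact; intersecting a normal closure with a factor of an amalgam does not behave this simply. The conclusion $A=\langle\langle R,C_A\rangle\rangle_A$ (or $A=\langle\langle C_A\rangle\rangle_A$ when $R\in B$) \emph{is} correct, but the justification is different: kill $C$ and $R$ in $G$ to obtain a free product $(A/N_A)*(B/N_B)$ which surjects onto $\widehat G$, and use one-endedness to force $A/N_A=1$. The paper argues essentially this way to get $m=0$, and then bounds the number of finite factors by the cleaner observation that $D_\infty$ has exactly two conjugacy classes of involutions, so at most two of the pairwise non-$A$-conjugate elements $a_1^{n/2},a_2,a_3,\ldots$ can be $A$-conjugate into $C_A$.

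Second, and this is the genuine gap, your Tietze argument for the residual case $A\cong C_n*C_2$ with $n>2$ assumes the generating involutions of $C_A$ satisfy $s=a^{n/2}$ and $t=c$. The analysis only gives that $s,t$ are $A$-\emph{conjugate} to $a^{n/2}$ and $c$; one cannot in general choose a free-product decomposition of $A$ placing both $s$ and $t$ in the factors simultaneously (conjugating one free factor by an arbitrary element of $A$ destroys the free-product structure), so the relation ``$c=t_B$'' is unavailable and $c$ cannot be eliminated. Even granting your assumption, the reduction produces $G\cong C_n*_{C_2}B$, a splitting over a finite group rather than a free factor; that would still contradict one-endedness via Stallings, but not for the reason you state. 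The paper resolves this case by an entirely different device: pass to $G'=\langle X;R^{n/2}\rangle$, observe that the image of $C$ in $G'$ is trivial (every involution of $G$ is conjugate to $R^{n/2}$), deduce $G'\cong A'*B'$ with $A'\cong C_{n/2}$ and $B'$ nontrivial, hence $G'$ is infinitely-ended, and finally use $\widehat{G'}\cong\widehat G$ together with Lemma~\ref{lem:ends} to conclude that $G$ itself is infinitely-ended, the required contradiction.
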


\begin{proof}
As $C$ is infinite dihedral it is generated by two elements of order two, $C\cong C_2\ast C_2$, and so $C\leq T$. We can then apply Lemma~\ref{lem:AsbgpT} to get that, without loss of generality, $A\leq T$ and $A\cong F_m\ast A_{1}\ast A_{2}\ast\ldots$ with each $A_i=\<a_i\>$ non-trivial cyclic of order $n_i$ dividing $n$ and $F_m$ is free of rank $m$. Now, because $A\leq T$ and $T$ is the normal closure of the torsion elements, the subgroup $A$ is equal to the $A$-normal closure of $C_A$ along with the finite cyclic factors, $A=\<\< C_A, A_1, A_2, \ldots \>\>_A$, and so is the $A$-normal closure of torsion elements. Hence, $m=0$. Note that both $A_1=\<a_1\>$ and $A_2=\<a_2\>$ must be non-trivial as otherwise $A$ is finite cyclic. Suppose, without loss of generality, that $A_1$ is of maximal order in the free-factor groups $A_i$.

In order to prove the lemma it is sufficient to prove that $A$ is infinite dihedral. To prove this we shall use the fact that, for all $i>1$, $a_1$ and $a_i$ are both conjugates of powers of the element $R$, by Proposition~\ref{prop:conjToR}. There are two cases: either the root $R$ of the relator $R^n$ is contained in a conjugate of $A$ or is contained in a conjugate of $B$.

Let us consider the second case: Suppose $R\in g^{-1}Bg$. Then, by the conjugacy theorem for free products with amalgamation, we have that $A$-conjugates of each $a_i$ are contained in the free-factor group $C_A$, and so $a_1$ and each $a_i$, $i>1$, have order two. This observation also implies that $A$ has no more than two $A$-conjugacy classes of elements of finite order, because $C_A\cong D_{\infty}$ has precisely two conjugacy classes of elements of finite order. Now, suppose $A$ is not infinite dihedral. Then $A_3=\<a_3\>$ is non-trivial, and as $A$ is a free product with $a_1$, $a_2$ and $a_3$ in different free factors we have that these three elements are pairwise non-conjugate in $A$. Hence, $A$ has at least three $A$-conjugacy classes of elements of finite order, which is a contradiction. Thus, $A$ is infinite dihedral.

Let us consider the first case: Suppose $R\in g^{-1}Ag$. Then we can re-write $R$ to get that $R\in A$, and indeed that $R=a_1$. Therefore, each $a_i$, $i>1$, is a $G$-conjugate of a power of $a_1$, $a_1^{k_i}$ say, but not an $A$-conjugate of $a_1^{k_i}$. Thus, applying the conjugacy theorem for free products with amalgamation, we have that an $A$-conjugate of $a_1^{n/2}$ is contained in $C_A$ and an $A$-conjugate of each $a_i$ is contained in $C_A$. As with the previous case, applying the fact that the infinite dihedral group has two conjugacy classes of elements of order two yields that  $A=A_1\ast A_2$ with $A_2\cong C_2$, and here $A_1\cong C_n$.

To complete the proof of this case it is sufficient to prove that $n=2$, as $A\cong C_n\ast C_2$. Suppose $n>2$. Now, $G=\<X; R^n\>$ and consider $G^{\prime}=\<X; R^{n/2}\>$. Then the image of $C$ in $G^{\prime}$ is trivial and so $G^{\prime}\cong A^{\prime}\ast B^{\prime}$ where $A^{\prime}\cong C_{n/2}$ and $B^{\prime}$ is non-trivial (as $\widehat{G^{\prime}}\cong\<X; R\>$ is non-trivial and $B^{\prime}$ surjects onto $\widehat{G^{\prime}}$). Thus, $G^{\prime}$ is infinitely ended. Then, because $\widehat{G^{\prime}}\cong \widehat{G}$ we can apply Lemma~\ref{lem:ends} to get that $G$ is infinitely ended, a contradiction. Thus, $n=2$ and $A$ is infinite dihedral, as required.

Combining the cases proves the result.
\end{proof}

We now give our classification of the ways in which a one-ended one-relator group with torsion can split as a free product with amalgamation over a virtually-$\mathbb{Z}$ subgroup. Recall that for $G=\<X; R^n\>$, $T:=\<\<R\>\>$ denotes the normal closure of the element $R$ and that a splitting $A\ast_CB$ is called {non-trivial} if $C\lneq A, B$.

\begin{lemma}
\label{lem:amalg}
Assume that $G=\<X; R^n\>$ is a one-ended one-relator group with torsion. If $G$ splits non-trivially as a free product with amalgamation $G\cong A\ast_CB$ over a virtually-$\mathbb{Z}$ subgroup $C$ then one of the following occurs.
\begin{enumerate}
\item $C\not\leq T$, and so $C$ is a subgroup of a malnormal, infinite cyclic subgroup of $G$.
\item Both $A$ and $C$ are infinite dihedral.
\item Both $B$ and $C$ are infinite dihedral.
\end{enumerate}
\end{lemma}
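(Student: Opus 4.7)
The plan is a simple case analysis on the isomorphism type of the virtually-$\mathbb{Z}$ amalgamating subgroup $C$. By Proposition~\ref{prop:vZsubgpsareZorD}, $C$ is either infinite cyclic or infinite dihedral, and the three conclusions of the lemma naturally split along this dichotomy: the infinite cyclic case will yield conclusion~(1), while the infinite dihedral case will yield conclusion~(2) or~(3). The preceding three lemmas (Lemma~\ref{lem:AsbgpT}, Lemma~\ref{lem:ZsplitFPA}, Lemma~\ref{lem:DsplitFPA}) have been set up precisely to handle these two cases, so the work is mostly assembly.

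First I would dispose of the infinite cyclic case. Since $G$ is one-ended and $G \cong A\ast_C B$ is a non-trivial splitting over an infinite cyclic $C$, Lemma~\ref{lem:ZsplitFPA} applies verbatim to give $C\not\leq T$. To deduce conclusion~(1) — that $C$ sits inside a malnormal infinite cyclic subgroup of $G$ — I would then invoke Lemma~\ref{lem:vcsubgroups}. That lemma requires that $C$ not be contained in any infinite dihedral subgroup of $G$, so I would need to dispatch that hypothesis. If $C$ were contained in some infinite dihedral $D\leq G$, then $D$ would contain the order-two generators of $D$, hence $D\leq T$ (using Proposition~\ref{prop:sbgpT} and that torsion in $G$ is conjugate into $\<\<R\>\>$); combined with the fact that $C$ has index at most two in any such $D$ and $C\not\leq T$, one obtains a contradiction. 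This then lets us apply Lemma~\ref{lem:vcsubgroups} cleanly to finish the infinite cyclic case.

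Second, if $C$ is infinite dihedral, Lemma~\ref{lem:DsplitFPA} applies directly and yields that one of $A$ or $B$ is infinite dihedral, giving conclusion~(2) or~(3) as required. Combining the two cases completes the proof. The main obstacle, as indicated above, is verifying the non-containment hypothesis of Lemma~\ref{lem:vcsubgroups} in the infinite cyclic case; everything else is a direct appeal to lemmas already proved. If the obstacle turned out to require more than the short argument above, a fallback would be to attempt to refine the amalgam over $C$ to one over the containing infinite dihedral subgroup $D$, reducing the problematic subcase to the dihedral case and hence to conclusions~(2) or~(3).
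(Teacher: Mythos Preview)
Your proposal is correct and is essentially the paper's own argument. The only cosmetic difference is the axis of the case split: the paper divides according to whether $C\leq T$ or $C\not\leq T$ (deducing in the latter case that $C$ cannot lie in any infinite dihedral subgroup, hence Lemma~\ref{lem:vcsubgroups} applies, and in the former case that $C$ must be infinite dihedral via Lemma~\ref{lem:ZsplitFPA}, hence Lemma~\ref{lem:DsplitFPA} applies), whereas you divide according to whether $C$ is infinite cyclic or infinite dihedral and reach the same two invocations; your explicit verification that $C\not\leq T$ forces $C$ out of every infinite dihedral subgroup is exactly the implicit step the paper uses, and the ``index at most two'' remark is unnecessary since $C\leq D\leq T$ already suffices.
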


\begin{proof}
By Proposition~\ref{prop:vZsubgpsareZorD}, $C$ is either infinite cyclic or infinite dihedral. By Lemma~\ref{lem:vcsubgroups}, if $C\not\leq T$ then $C$ is a subgroup of a malnormal, infinite cyclic subgroup of $G$. On the other hand, suppose $C\leq T$. Then, by Lemma~\ref{lem:ZsplitFPA}, $C$ is infinite dihedral, and then, by Lemma~\ref{lem:DsplitFPA}, one of $A$ or $B$ is infinite dihedral.
\end{proof}

We now prove the Principal Lemma, which gives a description of the edge groups of the JSJ-decompositions of one-ended one-relator groups with torsion.

\begin{lemma}[Principal Lemma]
\label{thm:generalJSJstructural}
Suppose $G=\<X; R^n\>$ is a one-ended one-relator group with torsion. If $v$ is an elementary vertex in the JSJ-decomposition of $G$ then
one of the following occurs.
\begin{enumerate}
\item The vertex $v$ has arbitrary degree, and $G_v\not\leq T$. Thus, $G_v$ is a subgroup of a malnormal, infinite cyclic subgroup of $G$.
\item The vertex $v$ has degree one, and, writing $e$ for the incident edge, both $G_v$ and $G_e$ are infinite dihedral.
\end{enumerate}
\end{lemma}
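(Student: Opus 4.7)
The approach is case analysis on whether $G_v$ lies in the subgroup $T = \langle\langle R\rangle\rangle$. Since $v$ is an elementary vertex, $G_v$ is a maximal virtually-$\mathbb{Z}$ subgroup of $G$, and Proposition~\ref{prop:vZsubgpsareZorD} forces $G_v$ to be either infinite cyclic or infinite dihedral.

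In the case $G_v \not\leq T$, I would use the classical fact (Newman, Karrass--Magnus--Solitar) that every torsion element of a one-relator group with torsion is conjugate to a power of $R$ and therefore lies in $T$. This immediately implies that every infinite dihedral subgroup of $G$, being generated by two involutions, is contained in $T$. So the hypothesis $G_v \not\leq T$ forces $G_v$ to be infinite cyclic. Maximality of $G_v$ as an elementary subgroup now prevents $G_v$ from being contained in any infinite dihedral subgroup of $G$, since such a containment would be proper and violate maximality. Lemma~\ref{lem:vcsubgroups} then places $G_v$ inside a malnormal infinite cyclic subgroup of $G$, yielding conclusion~(1).

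The substantive case is $G_v \leq T$. Here I would fix any edge $e$ at $v$, joining $v$ to a rigid or orbifold vertex $u$, and collapse the JSJ graph of groups to the one-edge splitting over $G_e$. Because $G_e \leq G_v \leq T$, Lemma~\ref{lem:HNN} forbids this splitting from being an HNN extension, so we obtain an amalgam $G \cong A_e \ast_{G_e} B_e$ with $G_v \leq A_e$ and $G_u \leq B_e$; non-triviality follows from the fact that edge groups in a JSJ-decomposition are proper subgroups of their incident vertex groups. Applying Lemma~\ref{lem:amalg} with $C = G_e \leq T$ rules out its first alternative, so $G_e$ is infinite dihedral and one of $A_e$ or $B_e$ is infinite dihedral. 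Since $G_u \leq B_e$ and rigid and orbifold vertex groups are non-elementary (edge groups being \emph{maximal} elementary inside them forces this), $B_e$ cannot be infinite dihedral, so $A_e$ is. Maximality of $G_v$ as an elementary subgroup then gives $G_v = A_e$, which forces the $v$-side of the one-edge collapse to consist of $v$ alone, so $v$ has degree one in the JSJ graph. Both $G_v$ and $G_e$ are thus infinite dihedral, giving conclusion~(2).

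The main obstacle I anticipate is rigorously setting up the one-edge collapse from an arbitrary JSJ graph of groups and verifying that the non-triviality hypothesis of Lemma~\ref{lem:amalg} holds in every subcase, so that Lemmas~\ref{lem:HNN} and~\ref{lem:amalg} can actually be invoked. A secondary subtlety is the tacit use that rigid and orbifold vertex groups are non-elementary; this is not stated explicitly in the definition but follows from the maximality requirement on edge groups and should be noted before the final step.
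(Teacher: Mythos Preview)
Your proof is correct and essentially matches the paper's: both pass to the one-edge splitting over an incident edge $e$ and invoke Lemmas~\ref{lem:HNN}, \ref{lem:amalg}, and \ref{lem:vcsubgroups} in the same way. The only difference is organizational---you case first on whether $G_v\leq T$ (so that your Case~1 bypasses the splitting lemmas entirely and goes straight to Lemma~\ref{lem:vcsubgroups}), whereas the paper cases first on whether $e$ is separating and deduces $G_e\not\leq T$ (hence $G_v\not\leq T$) from Lemmas~\ref{lem:HNN} and~\ref{lem:amalg}; the non-triviality concern you flag is handled identically, and just as implicitly, in the paper's argument.
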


\begin{proof}
Let $\Gamma$ be the graph underlying the JSJ-decomposition of $G$. Let $v$ be an arbitrary elementary vertex of $\Gamma$ and let $e$ be an edge incident to $v$. We shall prove that either $G_e\not\leq T$ (which implies $G_v\not\leq T$) or $v$ is an elementary vertex of degree one whose vertex group and adjacent edge group are each infinite dihedral. Combining the first case with Lemma~\ref{lem:vcsubgroups} proves the theorem. Recall that $G_e$ is virtually-$\mathbb{Z}$.

Suppose $e$ is \emph{not} a separating edge of $\Gamma$. Then $G$ splits as an HNN-extension over $G_e$, so $G=A\ast_{G_e^t=G_e^{\prime}}$. Thus, by Lemma~\ref{lem:HNN}, we have that $G_e\not\leq T$, as required.

Suppose $e$ is a separating edge of $\Gamma$. Then $G$ splits as a free product with amalgamation over $G_e$, so $G=A\ast_{G_e}B$ where $G_e\lneq A, B$. Thus, by Lemma~\ref{lem:amalg}, we have that either $G_e\not\leq T$ or $G_e$ and one of $A$ or $B$ is infinite dihedral. Suppose, without loss of generality, that $A$ is infinite dihedral. Then $A$ corresponds to an elementary vertex $v$ of degree one in the JSJ-decomposition, where $G_v=A$ is infinite dihedral and $v$ is a vertex of the edge $e$. This prove the lemma.
\end{proof}

\subsection{The proof of Theorem~\ref{thm:TheoremA}}
We now prove Theorem~\ref{thm:TheoremA}, as stated in the introduction to Section~\ref{sec:JSJ}. The proof follows quickly from the Principal Lemma, Lemma~\ref{thm:generalJSJstructural}.

\begin{proof}
[Theorem~\ref{thm:TheoremA}]
We shall prove that precisely one non-elementary vertex group of the JSJ-decomposition of $G$ contains torsion. That all other non-elementary vertex groups are torsion-free hyperbolic follows immediately as all vertex groups in a JSJ-decomposition are hyperbolic. The classification of elementary vertices follows from Lemma~\ref{thm:generalJSJstructural}.

Note that the root $R$ of the relator $R^n$ is contained in a conjugate of a vertex group $G_v$ in the JSJ-decomposition of $G$, and suppose that $g$ has finite order in $G$ but is contained in a different vertex group of the JSJ-decomposition, so $g\in G_w$ with $v\neq w$. Collapse each elementary vertex of degree one into its adjacent vertex. In the resulting graph of groups, $R$ is contained in a conjugate of a vertex group $G_{v^{\prime}}$, and the element $g$ of finite order is contained in a different vertex group, so $g\in G_{w^{\prime}}$ with ${v^{\prime}}\neq {w^{\prime}}$. Then, by Proposition~\ref{prop:conjToR}, $g$ is conjugate to some element of $G_{v^{\prime}}$, so $h^{-1}gh\in G_{v^{\prime}}$. Hence, $g\in G_{hv^{\prime}}$ with $h{v^{\prime}}\neq w$. However, then $g$ fixes the geodesic between $w$ and $h{v^{\prime}}$, and hence some edge stabiliser contains torsion, a contradiction.
\end{proof}

The possible existence of elementary vertices whose associated group is infinite dihedral in Theorem~\ref{thm:TheoremA}, which we shall call \emph{dihedral vertices}, is merely a technical quirk resulting from the definition of a JSJ-decomposition which we used (as opposed to the definition as a deformation space, which omits elementary vertices \cite{dahmani2011isomorphism}) and does not impinge on our partial resolution of the folk conjecture. Indeed, we do not know if there exist any one-relator group with torsion $G$ whose JSJ-decomposition contains dihedral vertices\footnote{See Question~\ref{Q:infDihedral} from the introduction.}, but given such a JSJ-decomposition one can sink each dihedral vertex into its unique adjacent vertex and the resulting decomposition is still canonical. Moreover, any dihedral vertices in the JSJ-decompositon of such a group $G$ are ignored when studying the outer automorphism group of $G$ \cite{levitt2005automorphisms} \cite{dahmani2011isomorphism} \cite{logan2014outer}, or when computing the isomorphism class of $G$ \cite{dahmani2011isomorphism}.

\section{Two-generator, one-relator groups with torsion}
\label{sec:2Gen1Rel}

In this section we state two theorems relating to the structure of non-Fuchsian one-ended two-generator, one-relator groups with torsion. Note that a two-generator, one-relator group with torsion $\langle a, b; R^n\rangle$ is Fuchsian if and only if $[a, b]$ is a cyclic shift of $R$ or $R^{-1}$ \cite{fine211classification} \cite{Pride1977}. The first of these theorems, Theorem~\ref{thm:JSJgraph}, gives, in a certain sense, the possibilities for the JSJ-decomposition of a non-Fuchsian one-ended two-generator, one-relator group with torsion. The second of these theorems, Theorem~\ref{thm:JSJdecomp}, applies Theorem~\ref{thm:JSJgraph} to prove that the outer automorphism groups of these groups are virtually-cyclic.

We have not included the proofs of Theorems~\ref{thm:JSJgraph}~and~\ref{thm:JSJdecomp} in this paper. This is because these theorems closely parallel results of Kapovich--Weidmann for two-generator torsion-free hyperbolic groups \cite{kapovich1999structure} and, because of Lemma~\ref{thm:generalJSJstructural}, the global structure of the proofs of Kapovich--Weidmann can be applied with only minor alterations. The altered proofs have been written out in full in the authors PhD thesis \cite{logan2014outer}.

Theorem~\ref{thm:JSJgraph} encodes the possibilities for the JSJ-decomposition of a two-generator, one-relator group with torsion. It describes the possible graph structure underlying a graph of groups decomposition of a two-generator, one-relator group with torsion $G$ where all edge stabilisers are virtually-$\mathbb{Z}$ and no vertex stabiliser is virtually-$\mathbb{Z}$. This encodes the JSJ-decomposition of the group $G$ because sinking the elementary vertices into adjacent vertices in the JSJ-decomposition yields a graph of groups where the underlying graph does not depend on how we dealt with the elementary vertices.

\begin{theorem}\label{thm:JSJgraph}
Assume that $G$ is a one-ended two-generator, one-relator group with torsion with $G\not\cong \< a, b; [a, b]^n\>$. Suppose that $G$ is the fundamental group of a graph of groups $\bGamma$ where all edge stabilisers are virtually-$\mathbb{Z}$ and no vertex-stabiliser is virtually-$\mathbb{Z}$. Then the graph $\Gamma$ underlying $\bGamma$ is one of the following.
\begin{enumerate}
\item A single vertex with no edges.
\item A single vertex with a single loop edge.
\end{enumerate}
Moreover, the vertex group is a two-generator, one-relator group with torsion.
\end{theorem}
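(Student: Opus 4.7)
First, observe that every edge group of $\bGamma$ must be infinite cyclic. Applying Lemma~\ref{lem:HNN} to each non-separating edge and Lemma~\ref{lem:amalg} to each separating edge of $\bGamma$, the only alternative to an infinite-cyclic edge group is that some adjacent vertex group is infinite dihedral; this is excluded by the hypothesis that no vertex-stabiliser of $\bGamma$ is virtually-$\mathbb{Z}$.

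Second, with infinite-cyclic edge groups and non-virtually-$\mathbb{Z}$ vertex groups in hand, I would invoke the Nielsen-reduction methods of Kapovich--Weidmann: a two-generator group acting on a Bass--Serre tree with infinite-cyclic edge stabilisers admits a Nielsen-equivalent generating pair in a reduced normal form with respect to the tree. A careful counting argument on this reduced form, combined with the two-generator hypothesis on $G$, forces the quotient graph $\Gamma$ to have exactly one vertex and first Betti number at most $1$. The excluded case $G\not\cong\<a, b; [a, b]^n\>$ is precisely what rules out the Fuchsian surface-like splittings that would otherwise contribute extra edges. This step is the main technical obstacle, but because the edge groups are already shown to be infinite cyclic (hence torsion-free), the reduction closely mirrors the torsion-free case of Kapovich--Weidmann; torsion appears only in vertex groups, where Proposition~\ref{prop:2gensbgps} supplies the needed classification of two-generator subgroups. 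This is the sense in which only minor alterations to Kapovich--Weidmann's original argument are required.

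Third, to identify the vertex group $H$: in the single-vertex-no-edges case $H=G$ is two-generator one-relator with torsion by hypothesis, while in the single-loop case $G=H\ast_{\<x\>=\<y\>}$ the generators of $G$ can be reduced, via the Nielsen-reduction of the second step, to an element of $H$ together with the stable letter $t$, from which it follows that $H$ is generated by that element together with the edge generator $x$, hence is two-generated. By Proposition~\ref{prop:2gensbgps}, $H$ is then either a two-generator one-relator group with torsion or a free product of cyclic groups. Among such free products of cyclic groups, the only two-generator ones that are \emph{not} themselves one-relator groups with torsion are $F_2$ and $C_n\ast C_m$ with $n,m>1$. The former is ruled out because an HNN-extension of the torsion-free group $F_2$ over torsion-free subgroups remains torsion-free, contradicting the torsion of $G$. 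The latter is ruled out because $C_n\ast C_m$ has two conjugacy classes of maximal finite subgroups, and these persist through the HNN-extension over the torsion-free edge subgroup $\<x\>$, contradicting the fact that every finite subgroup of $G=\<X; R^n\>$ is conjugate into $\<R\>$.
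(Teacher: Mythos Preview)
Your proposal is correct and matches the approach the paper itself outlines. Note that the paper does not actually include a proof of this theorem: it explicitly states that the argument parallels Kapovich--Weidmann's treatment of two-generator torsion-free hyperbolic groups, with Lemma~\ref{thm:generalJSJstructural} (whose content is precisely Lemmas~\ref{lem:HNN} and~\ref{lem:amalg}) supplying the key reduction to infinite-cyclic edge groups, and defers the full details to the author's thesis. Your three-step outline---reduce edge groups to $\mathbb{Z}$ via Lemmas~\ref{lem:HNN} and~\ref{lem:amalg}, run the Kapovich--Weidmann Nielsen reduction, then identify the vertex group using Proposition~\ref{prop:2gensbgps}---is exactly this strategy, and your direct appeal to Lemmas~\ref{lem:HNN} and~\ref{lem:amalg} (rather than the JSJ-specific Lemma~\ref{thm:generalJSJstructural}) is in fact the more appropriate formulation here, since $\bGamma$ is an arbitrary splitting rather than the canonical JSJ. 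One small imprecision: in the separating-edge case the factor $A$ in $G=A\ast_{G_e}B$ is the fundamental group of a sub-graph-of-groups, not a single vertex group, so the contradiction is that $A$ being infinite dihedral would force the (non-virtually-$\mathbb{Z}$) vertex group it contains to be virtually-$\mathbb{Z}$; the conclusion is unaffected.
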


Theorem~\ref{thm:JSJdecomp} applies Theorem~\ref{thm:JSJgraph} to prove that the outer automorphism groups of non-Fuchsian two-generator, one-relator groups with torsion are virtually-cyclic. The proof is, in essence, an application of results of Levitt \cite{levitt2005automorphisms} to Theorem~\ref{thm:JSJgraph}.

\begin{theorem}\label{thm:JSJdecomp}
Assume that $G$ is a one-ended two-generator, one-relator group with torsion. Then either $\out(G)$ is virtually-cyclic or $G\cong \< a, b; [a, b]^n\>$ for some $n>1$.
\end{theorem}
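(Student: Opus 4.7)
The plan is to combine Theorem~\ref{thm:JSJgraph} with Levitt's description~\cite{levitt2005automorphisms} of $\out$ of a one-ended hyperbolic group in terms of its JSJ-decomposition. Since one-relator groups with torsion are hyperbolic, $G$ is a one-ended hyperbolic group, and by Theorem~\ref{thm:JSJgraph} we may assume $G\not\cong \<a, b; [a, b]^n\>$ and conclude that the underlying graph of the decomposition of $G$ obtained by sinking each elementary vertex into its adjacent non-elementary vertex is either a single vertex or a single vertex with a single loop. Moreover the unique non-elementary vertex group is itself a two-generator one-relator group with torsion.

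I would next verify that the JSJ-decomposition of $G$ has no orbifold (maximal hanging Fuchsian) vertices. By Theorem~\ref{thm:JSJgraph} the only non-elementary vertex group is a two-generator one-relator group with torsion, and the only Fuchsian example of such a group is $\<a, b; [a, b]^n\>$ \cite{fine211classification} \cite{Pride1977}, which has been excluded by hypothesis. Levitt's theorem then states that a finite-index subgroup of $\out(G)$ fits in a short exact sequence whose kernel is a finitely-generated abelian ``twist'' group, generated by Dehn-twist-like automorphisms along the edges of the JSJ, and whose quotient is a direct product of mapping class groups of $2$-orbifolds, one for each orbifold vertex. In our situation the quotient is trivial, so $\out(G)$ is virtually finitely-generated abelian.

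Finally I would bound the rank of the twist group by the first Betti number of the collapsed graph from Theorem~\ref{thm:JSJgraph}, which is either $0$ or $1$, and conclude that $\out(G)$ is virtually cyclic. The main obstacle is that although the graph produced by Theorem~\ref{thm:JSJgraph} has first Betti number at most $1$, the actual JSJ-decomposition may still contain elementary vertices absorbed under the collapse, and a priori the edges meeting them could contribute additional twists. To handle this, I would use the fact that each such edge group is a finite-index subgroup of its adjacent elementary vertex group (by the Principle Lemma, Lemma~\ref{thm:generalJSJstructural}, together with Proposition~\ref{prop:vZsubgpsareZorD}), so every Dehn twist along such an edge coincides, up to inner automorphisms of the elementary vertex group, with an automorphism of that vertex group and is therefore trivial in $\out(G)$. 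Consequently the rank of the twist group equals the first Betti number of the collapsed graph, which is $0$ or $1$, yielding the desired virtually cyclic conclusion.
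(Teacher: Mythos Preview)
Your proposal is essentially the same approach the paper takes: the paper explicitly states that the proof ``is, in essence, an application of results of Levitt~\cite{levitt2005automorphisms} to Theorem~\ref{thm:JSJgraph}'', following the template of Kapovich--Weidmann~\cite{kapovich1999structure}, and defers the full details to the author's thesis~\cite{logan2014outer}. Your outline---collapse to the graph of Theorem~\ref{thm:JSJgraph}, rule out orbifold vertices, and bound the twist group by the first Betti number---matches this exactly; the only caveat is that your handling of twists at elementary vertices and your exclusion of orbifold vertices (which concerns hanging Fuchsian groups with boundary, not closed Fuchsian groups) would need to be made more precise in a full write-up, but the paper itself does not supply these details either.
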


Using Theorem~\ref{thm:JSJdecomp} as a starting point, it can be shown that a one-ended two-generator, one-relator group with torsion has outer automorphism group isomorphic to one of only a finite number of groups, all of which are subgroup of $\operatorname{GL}_2(\mathbb{Z})$ \cite{Logan:2012} (see also the author's PhD thesis~\cite{logan2014outer}).

\section{Some examples of JSJ-decompositions}
\label{sec:examples}

We end the paper by giving some examples of one-relator groups with torsion which have non-trivial JSJ-decomposition. These examples demonstrate that the statement of Theorem~\ref{thm:TheoremA} is non-trivial. We first give a two-generator example, before demonstrating a general method for producing examples with arbitrary number of generators.

\begin{example}
\label{ex:2-genLoop}
Consider the following one-relator group with torsion.
\[
G=\<a, t; (t^{-1}a^{-1}ta^2)^2\>
\]
The group $G$ splits as an HNN-extension with stable letter $t$ and base group $\< b, c; (bc^2)^2\>\cong \mathbb{Z}\ast C_2$, the free product of the infinite cyclic group with the cyclic group of order two. By Theorem~\ref{thm:JSJgraph}, the vertices of degree greater than one of the JSJ-decomposition of $G$ consist of a single rigid vertex with vertex group $\mathbb{Z}\ast C_2$ and an elementary vertex with vertex group $\mathbb{Z}$. These vertices are connected with two edges, so that sinking the elementary vertex into the rigid vertex we obtain the aforementioned HNN-extension. This is illustrated in Figure~\ref{fig:2GenExample}.
\end{example}
		
\begin{figure}[ht]
\centering
\begin{tikzpicture}
\filldraw[black] (0, 1) circle (2pt);
\filldraw[black] (0, -1) circle (2pt);
\draw
(0, -1) .. controls (-1, -0.5) and (-1, 0.5) .. (0, 1);
\draw
(0, -1) .. controls (1, -0.5) and (1, 0.5) .. (0, 1);
\node at (-1.2, 0) {$\<e_1\>$};
\node at (1.2, 0) {$\<e_2\>$};
\node at (0, 1.3) {$\<x\>$};
\node at (0, -1.3) {$G_v=\<b, c; (bc^2)^2\>$};
\end{tikzpicture}
\caption{
\footnotesize Omitting elementary vertices of degree $1$, the JSJ-decomposition of the group $G=\<a, t; (t^{-1}a^{-1}ta^2)^2\>$ is the above graph of groups. Edge maps are given by $e_1\mapsto x$, by $e_2\mapsto x$, by $e_1\mapsto b$ and by $e_2\mapsto c^2$.
}\label{fig:2GenExample}
\end{figure}
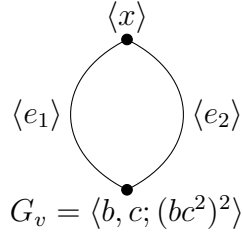

Example~\ref{ex:2-genLoop} is of interest because B.B.Newman demonstrated that the group $G$ contains a three-generated subgroup which is neither free nor a one-relator group with torsion~\cite{pride1977twogensubgps}. Therefore, this group was a possible counter-example to the folk conjecture which motivates this paper and, following our formalisation of the folk conjecture, a possible counter-example to Question~\ref{Q:freevertices} as stated in the introduction. The above demonstrates that the construction is, in fact, not a counter-example to our JSJ-decomposition interpretation of the folk-conjecture.

\p{More than two generators} The following example easily generalises to give a method for producing one-relator groups with torsion $G$ with an arbitrary number $m>2$ of generators which have non-trivial JSJ-decomposition.

\begin{example}
Consider the following one-relator group with torsion, where $n$ is odd.
\[
G=\<a, b, c, d; ([a^2, b^2]^2[c^2, d^2][a^2, b^2]^{-3}[c^2, d^2])^n\>
\]
The group $G$ splits as a free product with amalgamation as follows.
\[
G=\<a, b\>\ast_{[a^2, b^2]=x}\<x, y; (x^2yx^{-3}y)^n\>\ast_{y=[c^2, d^2]}\<c, d\>
\]
The middle group $\< x, y; (x^2yx^{-3}y)^n\>$ has finite outer automorphism group \cite[Lemma 6.2]{Logan:2012} so this splitting cannot be refined further. Therefore, adding in elementary vertices to split each edge yields the JSJ-decomposition of the group $G$. This is illustrated in Figure~\ref{fig:GeneralExample}.
\end{example}
This construction can be generalised by taking the relator to be the product of words over disjoint alphabets. Note that in a group obtained from this generalisation, one vertex group in the analogous splitting is a one-relator group with torsion, and all other vertex groups are free, by the Freiheitssatz \cite[Theorem 4.10]{mks}.
		
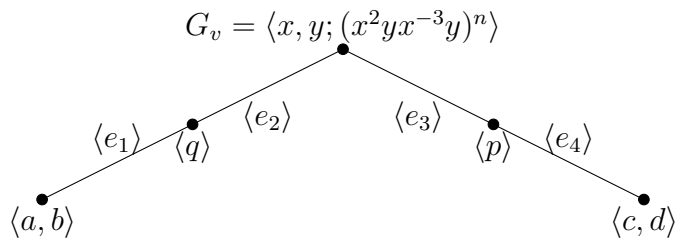
\begin{figure}[ht]
\centering
\begin{tikzpicture}
\filldraw[black] (0, 0) circle (2pt);
\filldraw[black] (2, -1) circle (2pt);
\filldraw[black] (4, -2) circle (2pt);
\filldraw[black] (-2, -1) circle (2pt);
\filldraw[black] (-4, -2) circle (2pt);
\draw
(0, 0) -- (2, -1);
\draw
(0, 0) -- (-2, -1);
\draw
(2, -1) -- (4, -2);
\draw
(-2, -1) -- (-4, -2);
\node at (0, 0.3) {$G_v=\<x, y; (x^2yx^{-3}y)^n\>$};
\node at (2, -1.3) {$\<p\>$};
\node at (4, -2.3) {$\<c, d\>$};
\node at (-2, -1.3) {$\<q\>$};
\node at (-4, -2.3) {$\<a, b\>$};
\node at (-3, -1.2) {$\<e_1\>$};
\node at (-1, -0.9) {$\<e_2\>$};
\node at (1, -0.9) {$\<e_3\>$};
\node at (3, -1.2) {$\<e_4\>$};
\end{tikzpicture}
\caption{
\footnotesize
This graph of groups is the JSJ-decomposition of $G=\<a, b, c, d; ([a^2, b^2]^2[c^2, d^2][a^2, b^2]^{-3}[c^2, d^2])^n\>$, $n>1$ odd. Edge maps are given by $e_1\mapsto [a^2, b^2]$, by $e_2\mapsto x$, by $e_3\mapsto y$, and by $e_4\mapsto [c^2, d^2]$; all other edge maps are trivial.
}\label{fig:GeneralExample}
\end{figure}

{\footnotesize\p{Acknowledgements}
The author would like to thank his PhD supervisor, Stephen J. Pride, and Tara Brendle for many helpful discussions about this paper, and Jim Howie for suggestions regarding a preprint. He would also like to thank an anonymous referee of another paper \cite{Logan:2012} for the suggestion to apply the ideas of Kapovich--Weidmann to prove Theorem~\ref{thm:JSJdecomp}, which led to this paper.}

\bibliographystyle{amsalpha}
\bibliography{BibTexBibliography}

\providecommand{\bysame}{\leavevmode\hbox to3em{\hrulefill}\thinspace}
\providecommand{\MR}{\relax\ifhmode\unskip\space\fi MR }
\providecommand{\MRhref}[2]{%
  \href{http://www.ams.org/mathscinet-getitem?mr=#1}{#2}
}
\providecommand{\href}[2]{#2}
\begin{thebibliography}{MKS04}

\bibitem[Bow98]{bowditch1998cut}
B.H. Bowditch, \emph{Cut points and canonical splittings of hyperbolic groups},
  Acta Math. \textbf{180} (1998), no.~2, 145--186.

\bibitem[Ceb71]{cebotar1971subgroups}
A.A. Cebotar, \emph{Subgroups of groups with one defining relation that do not
  contain free subgroups of rank $2$}, Algebra i Logica \textbf{10} (1971),
  570--586.

\bibitem[DG11]{dahmani2011isomorphism}
F.~Dahmani and V.~Guirardel, \emph{The isomorphism problem for all hyperbolic
  groups}, Geom. Funct. Anal. \textbf{21} (2011), no.~2, 223--300.

\bibitem[FKS72]{fischer1972one}
J.~Fischer, A.~Karrass, and D.~Solitar, \emph{On one-relator groups having
  elements of finite order}, Proc. Amer. Math. Soc \textbf{33} (1972),
  297--301.

\bibitem[FR93]{fine211classification}
B.~Fine and G.~Rosenberger, \emph{Classification of all generating pairs of two
  generator {Fuchsian} groups, from: {Groups 93 Galway/St. Andrews}, {Vol. 1}
  ({Galway}, 1993)}, London Math. Soc. Lecture Note Ser. \textbf{211} (1993),
  205--232.

\bibitem[FT15]{Friedl2015Two}
S.~{Friedl} and S.~{Tillmann}, \emph{{Two-generator one-relator groups and
  marked polytopes}}, arXiv: 1501.03489 (2015).

\bibitem[Hat00]{hatcher2000notes}
A.~Hatcher, \emph{Notes on basic 3-manifold topology}.

\bibitem[IT15]{2014Ichihara}
K.~Ichihara and Y.~Temma, \emph{Non-left-orderable surgeries and generalized
  baumslag--solitar relators}, J. Knot Theory Ramifications \textbf{24} (2015),
  no.~01, 1550003.

\bibitem[KMS60]{karrass1960elements}
A.~Karrass, W.~Magnus, and D.~Solitar, \emph{Elements of finite order in groups
  with a single defining relation}, Comm. on Pure App. Math. \textbf{13}
  (1960), no.~1, 57--66.

\bibitem[KS71]{karrass1971subgroups}
A.~Karrass and D.~Solitar, \emph{Subgroups of {HNN} groups and groups with one
  defining relation}, Canad. J. Math \textbf{23} (1971), no.~4, 627--643.

\bibitem[KW99]{kapovich1999structure}
I.~Kapovich and R.~Weidmann, \emph{On the structure of two-generated hyperbolic
  groups}, Math. Zeitschrift \textbf{231} (1999), no.~4, 783--801.

\bibitem[Lev05]{levitt2005automorphisms}
G.~Levitt, \emph{Automorphisms of hyperbolic groups and graphs of groups},
  Geom. Ded. \textbf{114} (2005), no.~1, 49--70.

\bibitem[Log14]{logan2014outer}
A.D. Logan, \emph{The outer automorphism groups of three classes of groups},
  Ph.D. thesis, University of Glasgow, 2014.

\bibitem[Log16]{Logan:2012}
\bysame, \emph{The outer automorphism groups of two-generator one-relator
  groups with torsion}, Proc. Amer. Math. Soc. (to appear) (2016).

\bibitem[LS77]{L-S}
R.C. Lyndon and P.E. Schupp, \emph{Combinatorial group theory}, Classics in
  Mathematics, Springer, 1977.

\bibitem[Mag30]{magnus1930freiheitssatz}
W.~Magnus, \emph{Uber diskontinuierliche {Gruppen} mit einer definerenden
  {Relation} (der {Freiheitssatz})}, J. Reine u. Angew. Math. \textbf{163}
  (1930), 141--165.

\bibitem[Mag32]{magnus1932wordproblem}
\bysame, \emph{Das {Identitatsproblem} fur {Gruppen} mit einer definerenden
  {Relation}}, Math. Ann. \textbf{106} (1932), 295--307.

\bibitem[MKS04]{mks}
W.~Magnus, A.~Karrass, and D.~Solitar, \emph{Combinatorial group theory}, Dover
  Publications, 2nd edition, 2004.

\bibitem[Pri77a]{Pride1977}
S.J. Pride, \emph{The isomorphism problem for two-generator one-relator groups
  with torsion is solvable}, Trans. Amer. Math. Soc \textbf{227} (1977),
  109--139.

\bibitem[Pri77b]{pride1977twogensubgps}
\bysame, \emph{The two-generator subgroups of one-relator groups with torsion},
  Trans. Amer. Math. Soc \textbf{234} (1977), no.~2, 483--496.

\bibitem[RS97]{rips1997cyclic}
E.~Rips and Z.~Sela, \emph{Cyclic splittings of finitely presented groups and
  the canonical {JSJ}-decomposition}, Ann. Math. (1997), 53--109.

\bibitem[Sel95]{sela1995isomorphism}
Z.~Sela, \emph{The isomorphism problem for hyperbolic groups {I}}, Ann. Math.
  (1995), 217--283.

\bibitem[Sel09]{Sela2009}
Z~Sela, \emph{Diophantine geometry over groups {VII}: The elementary theory of
  a hyperbolic group}, Proc. Lon. Math. Soc. \textbf{99} (2009), 217--273.

\bibitem[SS03]{trees}
J.P. Serre and J.~Stilwell, \emph{Trees}, Springer Monographs in Mathematics,
  Springer, 2003.

\bibitem[Sta68]{stallings1968torsion}
J.R. Stallings, \emph{On torsion-free groups with infinitely many ends}, Ann.
  Math. \textbf{88} (1968), no.~2, 312--334.

\bibitem[Wis12]{wise2012riches}
D.T. Wise, \emph{From riches to raags: 3-manifolds, right-angled {Artin}
  groups, and cubical geometry}, vol. 117, American Mathematical Soc., 2012.

\end{thebibliography}

\end{document}